\documentclass[reqno]{amsart}
\usepackage[utf8]{inputenc}
\usepackage{amssymb, amsthm}
\usepackage{mathrsfs}
\usepackage{mathabx}
\usepackage{bm,bbm}
\usepackage{microtype} 
\usepackage{comment} 

\parindent0em
\parskip0.3em
\hoffset0em \oddsidemargin10pt \evensidemargin10pt \textwidth42em

\usepackage{amsfonts}
\DeclareMathAlphabet{\mathpgoth}{OT1}{pgoth}{m}{n}
\DeclareMathAlphabet{\mathesstixfrak}{U}{esstixfrak}{m}{n}
\DeclareMathAlphabet{\mathboondoxfrak}{U}{BOONDOX-frak}{m}{n}
\usepackage{amsmath,amssymb}
\usepackage[bbgreekl]{mathbbol}
\usepackage{yfonts,mathtools}

\usepackage{tikz-cd}
\usepackage[all,cmtip]{xy}

\numberwithin{equation}{section}
\usepackage{soul}


\usepackage{hyperref}
\hypersetup{colorlinks}
\definecolor{darkred}{rgb}{0.5,0,0}
\definecolor{darkgreen}{rgb}{0,0.5,0}
\definecolor{darkblue}{rgb}{0,0,0.5}
\hypersetup{colorlinks, linkcolor=darkblue, filecolor=darkgreen, urlcolor=darkred, citecolor=darkblue}
\makeatletter 
\@addtoreset{equation}{section}
\makeatother  

\numberwithin{equation}{section}


\newtheorem{thma}{Theorem}

\newtheorem{thm}{Theorem}[section]
\newtheorem{cor}[thm]{Corollary}

\newtheorem{conj}[thm]{Conjecture}
\newtheorem{prop}[thm]{Proposition}

\newtheorem{lemma}[thm]{Lemma}
\theoremstyle{definition}
\newtheorem{defn}[thm]{Definition}
\theoremstyle{remark}
\newtheorem{rem}[thm]{Remark}
\newtheorem{hyp}[thm]{Hypothesis}

\usepackage{xcolor}
\newtheorem{example}[thm]{Example}



\newcommand{\beq}{\begin{equation}}
\newcommand{\eeq}{\end{equation}}
\newcommand{\beqn}{\begin{equation*}}
\newcommand{\eeqn}{\end{equation*}}
\newcommand{\ov}{\overline}
\newcommand{\mb}{\mathbb}
\newcommand{\mc}{\mathcal}
\newcommand{\mf}{\mathfrak}

\newcommand{\wc}{\widecheck}

\newcommand{\wh}{\widehat}

\newcommand{\eq}{{\it eq}}
\newcommand{\qu}{/\hspace{-0.1cm}/}

\newcommand{\ev}{{\rm ev}}


\title{Quantum Kirwan Map and Quantum Steenrod Operation}


\author{Guangbo Xu}
\address{Department of Mathematics, Rutgers University, Hill Center--Busch Campus, 110 Frelinghuysen Road, Piscataway, NJ 08854-8019, USA}
\email{guangbo.xu@rutgers.edu}

\thanks{The author is supported by NSF DMS-2345030.}

\date{\today}

\begin{document}

\begin{abstract}
We construct an equivariant extension of the quantum Kirwan map and show that it intertwines the classical Steenrod operation on the cohomology of a classifying space with the quantum Steenrod operation of a monotone symplectic reduction. This provides a new method of computing quantum Steenrod operations developed by Seidel--Wilkins. When specialized to the non-equivariant piece, our result also resolves the monotone case of Salamon's quantum Kirwan map conjecture in the symplectic setting. 
\end{abstract} 

\maketitle

\setcounter{tocdepth}{1}

\tableofcontents

\section{Introduction}

Steenrod operations have recently been extended to a quantum version in the context of symplectic topology by taking into account the contributions of holomorphic curves \cite{fukaya-quantization}\cite{wilkins2020construction}\cite{Seidel_Wilkins_2022}. This operation has found  novel applications in many related areas such as Hamiltonian dynamics (see \cite{shelukhin_2020, Shelukhin_2021}, \cite{Cineli_Ginzburg_Gurel_2022}, \cite{Rezchikov_2021}) and arithmetic mirror symmetry \cite{Seidel_2023}. See also \cite{Lee_2023_1, Lee_2023_2} and  \cite{Chen_2024, Chen_2024_Fukaya} for more recent studies on quantum Steenrod operations.

In this paper we establish a formula for the quantum Steenrod operation for symplectic manifolds admitting GLSM (gauged linear sigma model) presentations. Such a formula would facilitate the computation of the quantum Steenrod operation, which is in general a difficult problem. The main ingredient is the quantum Kirwan map originally proposed by Salamon, studied by Ziltener \cite{Ziltener_book} and Woodward \cite{Woodward_15}, and its ${\mb Z}_p$-equivariant extension.

\subsection{Assumptions and notations}

The geometric assumptions are very close to that of Gaio--Salamon \cite{Gaio_Salamon_2005} and Ziltener \cite{Ziltener_book} in the study of the adiabatic limits of the symplectic vortex equation. Let $K$ be a compact connected Lie group. Let $(V, \omega_V)$ be a symplectic manifold with a Hamiltonian $K$-action. Let $\mu: V \to {\mf k}^*$ be a moment map. For any $\eta \in {\mf k}$, the infinitesimal action of $\eta$ is the Hamiltonian vector field ${\mc X}_\eta$ associated to $\mu(\eta)$.

\begin{hyp}\label{hyp1}
{\bf (Regular quotient)} $\mu$ is a proper map and $K$ acts freely on $\mu^{-1}(0)$.    
\end{hyp}

Under this assumption, the symplectic quotient 
\beqn
V \qu K:= \mu^{-1}(0)/K
\eeqn
is a smooth compact manifold with a canonically induced symplectic form.

To guarantee the compactness of vortex moduli, we impose the following convexity condition.

\begin{hyp}\label{hyp2}{\bf (Equivariant convexity)}
There is a $K$-invariant, $\omega_V$-compatible almost complex structure $\wh J_V$, a proper $K$-invariant function $f_V: V \to [0, +\infty)$, and a constant $C>0$ such that
\beqn
f_V(x) \geq C \Longrightarrow \left\{ \begin{array}{l} \langle \nabla_\xi \nabla f_V(x), \xi \rangle + \langle \nabla_{\wh J_V \xi} \nabla f_V(x), \wh J_V \xi \rangle \geq 0\ \forall \xi \in T_x V,\\[0.3cm]
 df_V(x) \wh J_V {\mc X}_{\mu(x)} (x)  \geq 0, \end{array} \right.
\eeqn
\end{hyp}

Indeed, if $V = {\mb C}^N$ and $K$ acts on $V$ via a linear representation $K \to U(N)$, it was shown in \cite{Cieliebak_Gaio_Salamon_2000} that Hypothesis \ref{hyp1} implies Hypothesis \ref{hyp2}.

To reduce the technicality, we make the following assumptions.

\begin{hyp}\label{hyp3}
{\bf (Contractible target)} $V$ is contractible. \end{hyp}

\begin{hyp}\label{hyp4}
{\bf (Equivariant monotonicity)} There is a positive real number $\lambda$ such that 
\beqn
\langle [\omega_V + \mu], d \rangle = \lambda \langle c_1^K(TV), d \rangle \ \forall d \in \pi_2^K(V).
\eeqn
\end{hyp}
As a consequence, the symplectic reduction $V \qu K$ is also aspherically monotone. To simplify the notations, we identify elements of $\pi_2(V \qu K)$ with their images under the natural map $\pi_2(V \qu K) \to \pi_2^K(V)\cong \pi_2(BK)$, where $BK$ is the classifying space of $K$.

For any commutative unital ring $R$, let $\Lambda_R = R((q))$ be the Novikov field of formal Laurent series in a formal variable $q$ with $R$ coefficients, $\Lambda_{R, 0}= R[[q]]$ the Novikov ring, and $\Lambda_{R, +} = q \Lambda_{R, 0}$.

We mainly use cohomology with coefficients either in ${\mb Z}$ or in ${\mb F}_p$. Denote by $H^*(\cdot; {\mb Z})/ {\rm Torsion}$ the free part of the integral cohomology. For quantum cohomology of a symplectic manifold $M$, the notation $QH^*(M; \Lambda_{\mb Z})$ means the quantum cohomology ring with underlying space being 
\beqn
\big( H^*(M; {\mb Z})/{\rm Torsion}\big) \otimes \Lambda_{\mb Z},
\eeqn
the same convention as in \cite{McDuff_Salamon_2004}.

The ${\mb Z}_p$-equivariant cohomology of a point is the following algebra
\beqn
H^*_{{\mb Z}_p}({\rm pt}; {\mb F}_p) = \left\{ \begin{array}{ll} {\mb F}_2[t, \theta]/ \langle \theta^2 - t \rangle,\ &\ p = 2,\\
{\mb F}_p[t, \theta]/ \langle t \theta - \theta t,\ \theta^2 \rangle,\ &\ p > 2 \end{array}\right.
\eeqn
Throughout this paper, the variables $t$ and $\theta$ always represent the variables in this equivariant cohomology satisfying such relations. In particular, for any algebra $R$ over ${\mb F}_p$, we use $R[t, \theta]$ to denote the algebra generated by $R$ and $t, \theta$ satisfying the above relations. 

\subsection{Quantum Kirwan map}

For a general symplectic reduction $V\qu K$, the Kirwan map is the compositions of the two natural maps 
\beqn
\xymatrix{ H_K^*(V) \ar[r] & H_K^*(\mu^{-1}(0)) \ar[r] & H^*( \mu^{-1}(0)/K) = H^*( V \qu K)}
\eeqn
which respects the multiplicative structure of cohomology. Here the first map is induced from the $K$-equivariant inclusion $\mu^{-1}(0) \hookrightarrow V$ and the second map is an isomorphism when $K$ acts freely on $\mu^{-1}(0)$. The ``quantum version'' of the Kirwan map was proposed by Salamon following the works of Dostoglou--Salamon \cite{Dostoglou_Salamon} and Gaio--Salamon \cite{Gaio_Salamon_2005}. The first result of this paper is a proof of this conjecture under the monotonicity assumption.

\begin{thma}\label{thma}
Under Hypothesis \ref{hyp1}---\ref{hyp4}, there is a ${\mb Z}$-linear map 
\beqn
\kappa: H_K^*(V; {\mb Z}) \to QH^*(V\qu K; \Lambda_{{\mb Z}})
\eeqn
such that 
\beq\label{qk_relation}
\kappa (a) \ast \kappa (b) = \kappa (a\cup b),\ \forall a, b \in H_K^*(V; {\mb Z}).
\eeq
\end{thma}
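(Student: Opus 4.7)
The strategy is to realise $\kappa$ as a count of affine symplectic vortices on $\mb C$ with marked points, then deduce the multiplicativity \eqref{qk_relation} from a one-parameter cobordism argument along the lines of Gaio--Salamon, Ziltener, and Woodward.

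First, let $\mc M_{k;\beta}$ denote the moduli space of gauge equivalence classes of finite-energy symplectic vortices $(A, u)$ on $\mb C$ (with its standard area form) representing an equivariant second homology class $\beta \in \pi_2^K(V)$ and carrying $k$ ordered marked points $z_1, \ldots, z_k$. By Ziltener's removable singularity and asymptotic analysis, every such vortex has a unique asymptotic $K$-orbit in $\mu^{-1}(0)$, giving an evaluation map $\ev_\infty: \mc M_{k;\beta} \to V \qu K$; evaluation at the marked points gives $\ev_i: \mc M_{k;\beta} \to V_K := EK \times_K V$, whose cohomology is $H_K^*(V)$. Hypothesis \ref{hyp2} provides the Uhlenbeck--Gromov compactness of the compactified $\overline{\mc M}_{k;\beta}$; Hypothesis \ref{hyp4} forces positive energy growth in $\beta$ and eliminates negative expected-dimension bubble strata; Hypothesis \ref{hyp3}, together with regularity of the quotient from Hypothesis \ref{hyp1}, implies that all underlying $K$-bundles over $\mb C$ are trivial and admit canonical equivariant orientations, so a virtual fundamental class exists over $\mb Z$ once a coherent system of domain-dependent perturbations is chosen. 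One then sets
\beqn
\kappa(a) = \sum_\beta q^{\langle c_1^K(TV), \beta \rangle} (\ev_\infty)_* \ev_1^*(a) \in QH^*(V \qu K; \Lambda_{\mb Z}),
\eeqn
where the pushforward is along the proper map $\ev_\infty$; this is the required $\mb Z$-linear map.

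Second, to prove \eqref{qk_relation} I would work with $\mc M_{2;\beta}$ normalised by $z_1 = 0$ and $z_2 = w \in \mb C^*$ and examine its compactification as $|w|$ varies. In the limit $w \to 0$, the marked points collide and a bubble pinches off; analytically it becomes a $K$-equivariant pseudo-holomorphic sphere in $V$ carrying three special points, and by Poincaré duality the two incoming constraints combine into $a \cup b$, producing $\kappa(a \cup b)$. In the limit $|w| \to \infty$, the Gaio--Salamon adiabatic limit takes over: the configuration splits into two separate affine vortices, one for each marked point, joined at their asymptotic values by a pseudo-holomorphic sphere in $V \qu K$; this contribution is exactly $\kappa(a) \ast \kappa(b)$, with the quantum product occurring precisely because of the connecting sphere in the reduction. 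The signed boundary of the oriented one-parameter moduli then yields \eqref{qk_relation}, while equivariant monotonicity rules out the intervention of any other codimension-one strata.

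The principal technical obstacle is controlling the neck-stretching degeneration at $|w| \to \infty$, because this is the exact interface where the vortex picture becomes the sphere picture and where the quantum product on $V \qu K$ is generated. Promoting this transition to a precise gluing statement with coherent orientations over $\mb Z$ is essentially the chain-level Kirwan morphism analysis of Gaio--Salamon refined to monotone integer coefficients; the contractibility assumption (Hypothesis \ref{hyp3}) is what makes integrality tractable, by providing canonical framings of bundles and eliminating potential torsion obstructions in the linearisation. Transversality by domain-dependent complex structures and the collision analysis at $w \to 0$ are comparatively routine once the compactness and orientation framework is in place, so the real weight of the argument sits at the adiabatic end.
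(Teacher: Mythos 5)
Your strategy is essentially the paper's: define $\kappa$ by counting one-marked affine vortices, and obtain \eqref{qk_relation} from a one-parameter family of two-marked configurations interpolating between marked-point collision (cup product upstairs) and vortex splitting joined by a sphere in $V\qu K$ (quantum product downstairs), with monotonicity killing all other codimension-one strata. Two places where your sketch glosses over the actual technical content are worth flagging. First, the formula $\kappa(a)=\sum_\beta q^{\langle c_1^K(TV),\beta\rangle}(\ev_\infty)_*\ev_1^*(a)$ is only the heuristic the paper states in the introduction: since $\ev_1$ lands in the infinite-dimensional Borel construction $V_K$, the paper does not pull back and push forward directly, but instead builds, for each marked point, a principal $K$-bundle (the Poincar\'e bundle) over the compactified moduli space, chooses a classifying map into a finite-dimensional approximation $B_nK$, and defines everything in the Morse model by cutting with unstable manifolds in $B_nK$ and stable manifolds in $V\qu K$; your appeal to a ``virtual fundamental class over $\mb Z$'' is neither needed nor used, and would in fact reintroduce the rationality issues the monotone hypothesis is there to avoid. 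Second, geometric perturbations alone do not regularize multiply-covered sphere bubbles in $V\qu K$; the paper handles this over $\mb Z$ by passing to the ``cusp compactification'' (replacing multiple covers by their underlying simple curves, \`a la Gromov) and checking the resulting strata have codimension at least two. Neither point changes the architecture of your argument, but both are where the proof actually lives; also note that to get a genuine one-parameter homotopy you must fix a real slice of the two-point configuration space (the paper takes $\mathrm{Im}\,z_0=\mathrm{Im}\,z_1$), since letting $w$ range over $\mb C^*$ gives a two-dimensional family.
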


We give an intuitive description of the quantum Kirwan map which was originally due to Salamon. We need to consider {\bf affine vortices}. These are solutions to the symplectic vortex equation over the complex plane ${\mb C}$. Roughly, an affine vortex is a map $u: {\mb C} \to V$ which is holomorphic after twisting by a gauge field $A = d + \phi ds + \psi dt$. More precisely, the pair $(A, u)$ needs to satisfy 
\begin{align*}
&\ \partial_s u + {\mc X}_\phi (u) + \wh J (\partial_t u + {\mc X}_\psi(u)) = 0,\ &\ \partial_s \psi - \partial_t \phi + [\phi, \psi] + \mu(u) = 0.
\end{align*}
Modulo gauge symmetry, these affine vortices form finite-dimensional moduli spaces indexed by the degree $d \in H_2^K(V; {\mb Z})$. Let ${\mc M}_1^{\rm vortex}(d)$ denote temporarily the moduli space of affine vortices of degree $d$ with one interior marking. Then there are two evaluation maps
\begin{align}
&\ \ev_{d, 0}: {\mc M}_1^{\rm vortex}(d) \to V_K,\ &\ \ev_{d,\infty}: {\mc M}_1^{\rm vortex}(d) \to V\qu K.
\end{align}
Here $V_K$ is the Borel construction of $V$. Then formally one can define 
\beqn
\kappa (b) = \sum_{d} q^d (\ev_{d, \infty})_* (\ev_{d, 0}^*(b) ).
\eeqn
The relation \eqref{qk_relation} follows from the description of 1-dimensional moduli spaces of affine vortices with two marked points.

\subsection{The quantum Steenrod operation and the equivariant quantum Kirwan map}

In ${\mb F}_p$-coefficients, the Steenrod operations are a collection of linear maps
\beqn
\Sigma_b: H^*(X; {\mb F}_p) \to H^*(X; {\mb F}_p)
\eeqn
labelled by $b \in H^*(X; {\mb F}_p)$. When $X$ is a manifold, the Steenrod operation can be defined via the Morse model (see \cite{Betz_Cohen_1994}). Indeed, the cup product (or intersection product) can be defined by counting (perturbed) flow trees with two incoming edges and one outgoing edge. In a similar manner, Steenrod operations can be constructed by counting flow trees with $p$ incoming edges and one outgoing edge, while the counts need to be taken equivariantly with respect to the cyclic shuffling of the $p$ incoming edges.

The idea of quantum Steenrod operation was due to Fukaya \cite{fukaya-quantization}. It deforms the  classical one by inserting holomorphic spheres in the center of the flow tree. This idea was firstly rigorously carried out by Seidel \cite{Seidel_pants}, and then by Wilkins \cite{wilkins2020construction} and Seidel--Wilkins \cite{Seidel_Wilkins_2022}. For any monotone symplectic manifold $(M, \omega)$, we denote the operation by 
\beqn
Q\Sigma_b: QH^*(M; \Lambda_{{\mb F}_p}) \to QH^*(M; \Lambda_{{\mb F}_p}) [t, \theta].
\eeqn


By observing the domain symmetry of the vortex equation, one naturally expects that the quantum Kirwan map admits a ${\mb Z}_p$-equivariant extension. Indeed, the rotational symmetry of the vortex equation is not used in the definition of $\kappa$. By requiring the perturbation data on the domain ${\mb C}$ to satisfy a ${\mb Z}_p$-equivariance condition, in a way similar to the geometric construction of the quantum Steenrod operation, one can define $\kappa^{eq}$ via  certain equivariant counts of affine vortices. Furthermore, by imitating the proof of Theorem \ref{thma} and that of \cite[Proposition 4.8]{Seidel_Wilkins_2022}, one obtains an equivariant analogue of the quantum Kirwan map conjecture. This is the second main result of this paper which is stated here. 

\begin{thma}\label{thmb}
Under Hypothesis \ref{hyp1}---\ref{hyp4}, there is an ${\mb F}_p$-linear map 
\beqn
\kappa^{\eq}: H^*(BK; {\mb F}_p) \to QH^*(V \qu K; \Lambda_{{\mb F}_p})[t, \theta]
\eeqn
(called the {\bf ${\mb Z}_p$-equivariant quantum Kirwan map}) satisfying 
\begin{align*}
&\ \kappa^{eq}(1) = 1,\ &\ \kappa^{eq}|_{q = 0} = \kappa|_{q= 0},
\end{align*}
and for all $a, b \in H^*(BK; {\mb F}_p)$, 
\beqn
Q\Sigma_{\kappa(b)}( \kappa^{eq}(a)) = \kappa^\eq (\Sigma_b(a)).
\eeqn
\end{thma}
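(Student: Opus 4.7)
The plan is to construct $\kappa^{\eq}$ by following the route sketched in the introduction: impose a ${\mb Z}_p$-equivariance condition on the abstract perturbation data over the vortex domain ${\mb C}$ with respect to the rotation $z \mapsto \zeta z$, where $\zeta = e^{2\pi\sqrt{-1}/p}$, and then take equivariant counts. More precisely, fix the interior marking at the origin (gauge fixing translation) and work with a family of abstract perturbations parametrized by $E{\mb Z}_p = S^\infty$ that is equivariant with respect to the simultaneous action of ${\mb Z}_p$ on $S^\infty$ and rotation on ${\mb C}$. This produces a Borel equivariant refinement of ${\mc M}_1^{\rm vortex}(d)$, whose equivariant pushforward defines
\beqn
\kappa^{\eq}(a) = \sum_d q^d\, (\ev_{d,\infty})_*^{{\mb Z}_p}\bigl(\ev_{d,0}^* a\bigr) \in QH^*(V \qu K;\Lambda_{{\mb F}_p})[t,\theta],
\eeqn
with the $(t,\theta)$-coefficients coming from $H^*(B{\mb Z}_p;{\mb F}_p) = {\mb F}_p[t,\theta]$ since $V\qu K$ carries the trivial ${\mb Z}_p$-action. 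The two normalizations are immediate: for $a = 1$ the integrand is $1$ and only $d = 0$ contributes, yielding $\kappa^{\eq}(1) = 1$; for $q = 0$ only the $d = 0$ stratum survives and the Borel structure collapses to the classical pullback-pushforward of $\kappa|_{q=0}$.

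For the intertwining $Q\Sigma_{\kappa(b)}(\kappa^{\eq}(a)) = \kappa^{\eq}(\Sigma_b(a))$, I would imitate both the proof of Theorem A and that of \cite[Proposition 4.8]{Seidel_Wilkins_2022}. For each $d$ and each $r \in (0,\infty)$, consider the ${\mb Z}_p$-equivariant moduli ${\mc M}^{\rm vortex}(d;r)$ of affine vortices on ${\mb C}$ carrying an output at $\infty$, a central input at $z_0 = 0$ labelled by $a$, and $p$ cyclic inputs at $z_k = r\zeta^k$, $k = 1,\ldots,p$, each labelled by $b$. The ${\mb Z}_p$-action permutes the cyclic inputs, fixes $z_0$, and rotates the domain. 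The key step is to study the boundary of the ${\mb Z}_p$-equivariant $1$-dimensional part of the total moduli $\bigcup_r {\mc M}^{\rm vortex}(d;r)$, which is encoded by the two endpoints of the $r$-parameter.

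As $r \to \infty$, the cluster of the $p$ cyclic markings escapes to infinity; after rescaling it forms a cyclically symmetric genus-zero stable map in $V\qu K$ with $p$ inputs, each fed by $\kappa(b)$ through the standard adiabatic identification of a vortex-to-sphere neck with the Kirwan map, applied to the residual vortex which contributes $\kappa^{\eq}(a)$ at its lone remaining marking. The net contribution is $Q\Sigma_{\kappa(b)}(\kappa^{\eq}(a))$. As $r \to 0$, the cyclic markings collide with $z_0$, producing a ${\mb Z}_p$-equivariant Borel flow-tree configuration on $V_K \simeq BK$ (using Hypothesis \ref{hyp3}) whose cohomological output is $\Sigma_b(a)$ fed into the single-input equivariant vortex moduli defining $\kappa^{\eq}$, yielding $\kappa^{\eq}(\Sigma_b(a))$. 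Vanishing of the signed equivariant boundary gives the required identity.

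The principal obstacle is the ${\mb Z}_p$-equivariant virtual perturbation framework needed to render all of these moduli transverse while preserving the cyclic symmetry, together with a careful identification of each of the two degeneration limits. The bubble-off at $r = \infty$ is especially delicate: one must ensure that the neck stretching between the residual vortex and the output-side sphere bubble factors cleanly through $\kappa$, so that the matching condition at the neck identifies the $p$ bubble inputs with $p$ independent copies of $\kappa(b)$ rather than some more intricate vortex-sphere hybrid. Compactness of all the moduli involved ultimately reduces, as in the proof of Theorem A, to the equivariant convexity of Hypothesis \ref{hyp2}.
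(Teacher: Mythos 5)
Your proposal is correct and follows essentially the same route as the paper: a ${\mb Z}_p$-equivariant, $S^\infty$-parametrized perturbation scheme for one-marked affine vortices defines $\kappa^{\eq}$ (the paper implements the pushforward via the Morse model, cusp compactification, Poincar\'e bundles and classifying maps into $B_nK$), and the intertwining identity comes from the boundary of exactly the one-parameter family you describe, with the $r\to 0$ end producing $\kappa^{\eq}(\Sigma_b(a))$ and the $r\to\infty$ end producing $Q\Sigma_{\kappa(b)}(\kappa^{\eq}(a))$. Two small points of divergence in emphasis: under the monotonicity hypothesis the paper needs no equivariant virtual framework (geometric perturbations suffice), and the step you defer to Seidel--Wilkins --- converting the diagonal fibre product over $S^\infty$ at each boundary into a composition of operations via the coproduct formula $\delta_*\Delta_i=\sum_j\Delta_j\otimes\Delta_{i-j}$ --- is the essential algebraic ingredient there.
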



\begin{rem}
There should be a parallel picture for Lagrangian correspondences. Under suitable monotonicity condition, a Lagrangian correspondence $M_1 \overset{L}{\longrightarrow} M_2$ induces a map $QH^*(M_1) \to QH^*(M_2)$ by counting pseudoholomorphic quilts. By considering the ${\mb Z}_p$-equivariant version, one should obtain a map in characteristic $p$ which intertwines with the quantum Steenrod operations. 
\end{rem}

\begin{rem}
Classical Steenrod operation on $H^*(BK)$ can be calculated by algebraic topological method (see for example \cite{Borel_Serre_1953}). Therefore Theorem \ref{thmb} potentially provides a new way of computing the quantum Steenrod operations on certain GIT quotient in the same spirit as the GLSM computation of quantum cohomology. To do this, one needs to be able to compute the equivariant quantum Kirwan map. The non-equivariant case has been carried out in various cases, see for example \cite{Gonzalez_Woodward_2019}, via explicit identification of the affine vortex moduli (see \cite{VW_affine} \cite{Guangbo_vortex}). To compute $\kappa^{eq}$, one needs to understand the ${\mb Z}_p$-action on the moduli spaces and apply the ${\mb Z}_p$-version of fixed point localization. 
\end{rem}

\begin{rem}
There could be another approach of computing the quantum Steenrod operations for toric manifolds using Seidel representation \cite{Seidel_representation}, in the same spirit as computing the quantum cohomology (see \cite{Tseng_Wang_2012}) using Seidel representation.
\end{rem}

\begin{rem}
There is an interesting distinction between the quantum Steenrod operation and the equivariant quantum Kirwan map. In characteristic $p$, in low degrees one expects that $Q\Sigma$ is determined by the classical Steenrod operation and quantum cohomology as the nontrivial quantum equivariant effect is related to certain $p$-fold multiple covers of holomorphic spheres which are ${\mb Z}_p$-fixed points of the moduli space. In contrast, the moduli space of affine vortices, even in low degrees (for example, degree $1$ for $V \qu K = \mb{CP}^1$), have ${\mb Z}_p$-fixed points, as the domain ${\mb Z}_p$-symmetry may be absorbed by gauge symmetry (essentially the target symmetry). 
\end{rem}

There is a simple algebraic consequence of Theorem \ref{thmb}. A subspace $I \subset H^*(X; {\mb F}_p)[t, \theta]$ is called a {\bf Steenrod ideal} if
\beqn
\Sigma_b(I) \subseteq I\ \forall b \in H^*(X; {\mb F}_p).
\eeqn
This is an equivariant analogue of the notion of (quantum) Stanley--Reisner ideal (see discussions in \cite{Gonzalez_Woodward_2019} for the toric case).

\begin{cor}
The kernel of $\kappa^{eq}: H^*(BK;\Lambda_{{\mb F}_p} )[t, \theta] \to H^*(V \qu K; \Lambda_{{\mb F}_p})[t, \theta]$ is a (nontrivial) Steenrod ideal.
\end{cor}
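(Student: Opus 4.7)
The plan is to derive both the ideal property and the nontriviality directly from Theorem~\ref{thmb} combined with an elementary module-rank comparison. As a preliminary step I would extend $\kappa^{\eq}$ from $H^*(BK;{\mb F}_p)$ to $H^*(BK;\Lambda_{{\mb F}_p})[t,\theta]$ by $\Lambda_{{\mb F}_p}[t,\theta]$-linearity, and extend each classical $\Sigma_b$ (which involves neither $q$ nor $t$ nor $\theta$) to the same module in the same way. The intertwining identity of Theorem~\ref{thmb} then promotes automatically to an identity of $\Lambda_{{\mb F}_p}[t,\theta]$-linear maps, since $Q\Sigma_c$ is by construction $\Lambda_{{\mb F}_p}[t,\theta]$-linear in its argument.

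Granting that extension, the Steenrod ideal property is immediate: for $a\in\ker\kappa^{\eq}$ and any $b\in H^*(BK;{\mb F}_p)$,
\[
\kappa^{\eq}(\Sigma_b(a))\;=\;Q\Sigma_{\kappa(b)}(\kappa^{\eq}(a))\;=\;Q\Sigma_{\kappa(b)}(0)\;=\;0,
\]
so $\Sigma_b(a)\in\ker\kappa^{\eq}$, which is exactly the defining condition in the paragraph preceding the corollary.

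For nontriviality, I would compare ranks over the Noetherian ring $\Lambda_{{\mb F}_p}[t,\theta]$: the target $QH^*(V\qu K;\Lambda_{{\mb F}_p})[t,\theta]$ is finitely generated (since $V\qu K$ is compact, so $H^*(V\qu K;{\mb F}_p)$ is finite-dimensional), while the source $H^*(BK;\Lambda_{{\mb F}_p})[t,\theta]$ is free of infinite rank, because $H^*(BK;{\mb F}_p)$ is infinite-dimensional for any compact connected Lie group of positive dimension (via a maximal torus it contains a polynomial subalgebra on positive-degree generators). Any $\Lambda_{{\mb F}_p}[t,\theta]$-linear map from an infinite-rank free module into a finitely generated module over a Noetherian ring has nonzero kernel, and the normalization $\kappa^{\eq}(1)=1$ from Theorem~\ref{thmb} ensures this kernel is proper. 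The only real care needed anywhere in this argument is verifying the compatibility in the first paragraph; this is purely bookkeeping because the source is free as a $\Lambda_{{\mb F}_p}[t,\theta]$-module on an ${\mb F}_p$-basis of $H^*(BK;{\mb F}_p)$, so I do not expect any genuine obstacle beyond that check.
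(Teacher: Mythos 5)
Your argument is correct and is exactly the intended one: the paper states this corollary as a ``simple algebraic consequence'' of Theorem~\ref{thmb} without further proof, and your derivation (extend $\kappa^{\eq}$ and $\Sigma_b$ by $\Lambda_{{\mb F}_p}[t,\theta]$-linearity, apply the intertwining identity to get the ideal property, and compare an infinite-rank free source against a finitely generated target to get a nonzero kernel, with $\kappa^{\eq}(1)=1$ giving properness) fills in precisely those details. No gaps.
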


\subsection{Conjectures about general situation}

If we drop Hypothesis \ref{hyp3} and Hypothesis \ref{hyp4}, our Theorem \ref{thma} and Theorem \ref{thmb} should still hold with suitable modifications. The counterpart of  Theorem \ref{thma} is just the general version of the quantum Kirwan map conjecture, stated in precise terms in \cite{Ziltener_book}.

\begin{conj}[Quantum Kirwan map conjecture]\label{conj18}
Counting affine vortices defines a morphism of cohomological field theories from the equivariant Gromov--Witten theory of $V$ to the Gromov--Witten theory of $V \qu K$. In particular, in the case of quantum cohomology, there is a linear map
\beqn
\kappa: QH_K^*(V) \to QH^*(V \qu K)
\eeqn
and a quantum cohomology class
\beq\label{curvature}
\tau \in QH^{\rm even} (V \qu K)
\eeq
such that for all $x, y \in QH_K^*(V)$ there holds
\beqn
\kappa( x \ast_K y ) = \kappa(x) \ast_{\tau} \kappa (y)
\eeqn
where $\ast_K$ is the equivariant quantum cup product of $V$ (which is the classical cup product if $V$ is aspherical) and $\ast_\tau$ is the quantum cup product of $V \qu K$ at the bulk class $\tau$. 
\end{conj}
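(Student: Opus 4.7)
The plan is to build on the construction of Theorem \ref{thma} by replacing integer counts with virtual fundamental cycles on moduli of affine vortices with arbitrary numbers of markings, and then to extract both $\kappa$ and the bulk class $\tau$ from the resulting collection of multilinear maps. First I would enlarge the moduli ${\mc M}_1^{\rm vortex}(d)$ to the moduli ${\mc M}_n^{\rm vortex}(d)$ of finite-energy affine vortices of class $d \in H_2^K(V;{\mb Z})$ with one distinguished marking at $\infty$ and $n$ interior markings on ${\mb C}$, taken modulo gauge transformations and the translation symmetry of ${\mb C}$. Dropping Hypothesis \ref{hyp4} means the energy $\langle [\omega_V+\mu], d \rangle$ is unbounded in terms of $c_1^K$, so the entire Novikov ring is genuinely needed, and dropping Hypothesis \ref{hyp3} means equivariant spheres in $V$ contribute to $QH_K^*(V)$; both are accommodated by the same geometric setup, since bubble trees in Ziltener's compactification $\overline{\mc M}_n^{\rm vortex}(d)$ naturally include both sphere bubbles in $V$ at interior markings and sphere bubbles in $V\qu K$ at the point $\infty$.

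Second, I would equip each stratified compactification with a virtual fundamental class using a global Kuranishi chart construction, or a polyfold Fredholm framework, adapted to the vortex equation in the spirit of Ziltener's transversality package and Woodward's work. Once these virtual cycles are in place, define
\beqn
\kappa_n(a_1,\dots,a_n) = \sum_d q^d (\ev_{d,\infty})_* \bigl( \ev_{d,1}^* a_1 \cup \cdots \cup \ev_{d,n}^* a_n \cap [\overline{\mc M}_n^{\rm vortex}(d)]^{\rm vir} \bigr),
\eeqn
so that $\kappa := \kappa_1$ plays the role of the Kirwan map in the conjecture and the bulk class $\tau \in QH^{\rm even}(V \qu K)$ is extracted from $\kappa_0$ (the nullary count, with only the evaluation at $\infty$), together with contributions from moduli where two interior markings have been forgotten. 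Convergence in the Novikov ring follows from equivariant Gromov compactness bounding the number of classes below any given energy.

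The third step is to derive the CohFT morphism axioms by analyzing the codimension-one boundary of one-dimensional components of $\overline{\mc M}_n^{\rm vortex}(d)$. The relevant strata split into three types: (i) configurations in which some interior markings collide and bubble off an equivariant sphere in $V$, giving the equivariant quantum product $\ast_K$ on the domain; (ii) configurations in which one or more sphere bubbles in $V\qu K$ form at $\infty$, attached to a lower-degree vortex, giving the bulk-deformed product $\ast_\tau$ on the target where $\tau$ encodes precisely the unstable vortex-free bubble trees at infinity; and (iii) nodes separating the affine vortex into trees of lower-degree vortices glued at an asymptotic point, producing compositions $\kappa_k \circ (\kappa_{n_1}\otimes\cdots\otimes \kappa_{n_k})$. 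Matching the three contributions in the $n=2$ case yields the identity $\kappa(x \ast_K y) = \kappa(x) \ast_\tau \kappa(y)$, and for general $n$ yields the full CohFT morphism structure, cyclically symmetric on marked points by the natural $S_n$-action on the domain markings.

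The hard part will be constructing the virtual fundamental classes in a way that is simultaneously compatible with every boundary and node stratum, particularly the mixed strata that involve both affine vortex bubbles in $V$ and Gromov--Witten sphere bubbles in $V\qu K$. These mixed strata arise only after performing the adiabatic limit of Gaio--Salamon, so the virtual theory must interpolate between the $r\to 0$ limit of the $r$-rescaled symplectic vortex equation on ${\mb C}$ and ordinary Gromov--Witten theory on $V\qu K$, while maintaining coherent perturbations along the entire compactified interpolation parameter. Controlling this adiabatic degeneration at the level of virtual cycles, so that the boundary decomposition in step three can be read off algebraically, is where the bulk of the technical work lies; any of the current polyfold, Kuranishi, or global-chart technologies adapted to vortex moduli with sphere bubbles in the reduction should in principle suffice, but producing such an adaptation in full detail is the main remaining obstacle to the conjecture.
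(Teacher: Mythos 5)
This statement is a \emph{conjecture} in the paper, not a theorem: the author explicitly does not prove it, and states that without the monotonicity and contractibility hypotheses ``the moduli spaces cannot be regularized by geometric perturbations,'' so that a virtual technique is required. Your proposal does not close that gap — it restates the expected architecture (markings on ${\mb C}$, the nullary term $\kappa_0$ producing $\tau$, the three types of codimension-one degeneration) and then, by your own admission in the final paragraph, defers the entire construction of virtual fundamental classes that are coherent across the mixed strata containing both sphere bubbles in $V$, affine vortex components, and sphere bubbles in $V\qu K$. That coherence is precisely the content of the conjecture's difficulty; announcing that ``any of the current polyfold, Kuranishi, or global-chart technologies \dots should in principle suffice'' is a statement of hope, not a proof step. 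So the proposal is a reasonable program summary (essentially the one already laid out by Ziltener and Woodward), but it is not a proof, and cannot be graded as one.

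Two further points where the outline as written is imprecise. First, you say the mixed strata ``arise only after performing the adiabatic limit of Gaio--Salamon'' and that the virtual theory must interpolate along the rescaling parameter $r$; in fact, sphere bubbles in $V\qu K$ at $\infty$ already appear in Ziltener's compactification of the affine vortex moduli itself (energy spreading out to infinity on ${\mb C}$), so for the quantum-cohomology-level identity $\kappa(x\ast_K y)=\kappa(x)\ast_\tau\kappa(y)$ no separate adiabatic family is needed — the boundary analysis lives entirely on the compactified scaled-curve moduli, as in the paper's proof of Theorem A. Second, your description of $\tau$ as coming from $\kappa_0$ ``together with contributions from moduli where two interior markings have been forgotten'' is muddled: $\tau$ is the pushforward under $\ev_\infty$ of the virtual class of unmarked affine vortex moduli, and no forgetting of markings is involved. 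Finally, you do not address the coefficient issue the paper flags — in the presence of multiply covered components and finite automorphisms the virtual counts are a priori only rational, so the conjecture as you set it up would at best hold over ${\mb Q}$ unless one invokes the integral machinery of Fukaya--Ono/Bai--Xu, which your proposal does not do.
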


Without the monotonicity assumption, the moduli spaces cannot be regularized by geometric perturbations. Hence to prove Conjecture \ref{conj18} certain virtual technique is necessary (compare with \cite{Woodward_15}). Moreover, orbifold points will affect the count and the above conjecture only hold in rational coefficients; in fact, beyond the semipositive case the quantum cohomology is only defined over rational numbers. 

On the other hand, recently Bai and the author developed the idea of Fukaya--Ono \cite{Fukaya_Ono_integer} and defined integer-valued curve counting invariants (see \cite{Bai_Xu_2022, Bai_Xu_Arnold}), using the stable complex structures on the moduli spaces, even beyond the semipositive case. Therefore, one can construct the integer version of the quantum cohomology as well as the integer version of the quantum Kirwan map, denoted by $\kappa^{\mb Z}$. This construction should also allow one to define the quantum Steenrod operations beyond the semipositive case. Moreover, to include the term $\tau\in QH^{\rm even}(V \qu K)$ of \eqref{curvature}, one needs to define a ``bulk deformation'' of the quantum Kirwan map, which is denoted by 
\beqn
Q\Sigma_{b, \tau}: QH^*(V \qu K; {\mb F}_p) \to QH^*(V \qu K; {\mb F}_p)[t, \theta].
\eeqn
With all these understood, the generalization of Theorem \ref{thmb} can be stated as follows.

\begin{conj}
There exists an ${\mb F}_p$-version of the quantum Kirwan map 
\beqn
\kappa: QH_K^*(V; \Lambda_{{\mb F}_p}) \to H^*(V \qu K; \Lambda_{{\mb F}_p}),
\eeqn
an equivariant quantum Kirwan map
\beqn
\kappa^{\it eq}: QH_K^*(V; \Lambda_{{\mb F}_p}) \to H^*(V \qu K; \Lambda_{{\mb F}_p})[t, \theta],
\eeqn
and a cohomology class
\beqn
\tau \in QH^{\rm even} (V \qu K; \Lambda_{{\mb F}_p})
\eeqn
such that for all $a \in QH_K^*(V;\Lambda_{{\mb F}_p})$, one has 
\beqn
\kappa^{eq}  ( Q\Sigma_b^K(a)) = Q\Sigma_{\kappa(b), \tau} (\kappa^{eq}(a)).
\eeqn
\end{conj}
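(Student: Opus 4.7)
The plan is to combine the equivariant vortex-count framework used in Theorem~\ref{thmb} with the stable complex virtual perturbation theory of Bai--Xu and a bulk deformation originating from sphere bubbling in $V \qu K$. First I would drop Hypotheses~\ref{hyp3}--\ref{hyp4} at the level of moduli: the compactified affine vortex moduli $\ov{\mc M}_n^{\rm vortex}(d)$ and their $\mb Z_p$-equivariant analogues (with $p$ cyclically ordered interior markings plus one outgoing marking) still carry stable complex structures, hence admit $\mb F_p$-valued equivariant virtual fundamental classes in the sense of \cite{Bai_Xu_2022, Bai_Xu_Arnold}. Integrating evaluation pullbacks of $H^*_K(V;\mb F_p)$-classes against these cycles produces well-defined operations $\kappa$ and $\kappa^{\eq}$, once one enforces cyclic equivariance of the perturbation data exactly as in the construction of $Q\Sigma$ in \cite{Seidel_Wilkins_2022}.

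Next I would extract the bulk class $\tau \in QH^{\rm even}(V\qu K;\Lambda_{\mb F_p})$ from the codimension-one boundary of a one-parameter family of two-marked vortex moduli, and then derive the intertwining identity by examining the $\mb Z_p$-equivariant one-dimensional moduli of affine vortices on $\mb C$ with $p$ cyclically arranged input markings and a single output marking. In the contractible/monotone setting of Theorem~\ref{thma} the only codimension-one stratum is a two-vortex splitting, which yields \eqref{qk_relation}; once $V$ is non-contractible or the reduction is only monotone, additional strata arise in which a holomorphic sphere in $V\qu K$ bubbles off at a marking, or a $K$-equivariant sphere in $V$ bubbles off on the vortex side and is absorbed by the Kirwan pushforward. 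An $\mb F_p$-equivariant bookkeeping packages these contributions into a single even-degree class $\tau$. Applied to the $p$-marked equivariant family, the same analysis at the two limits of a rotation-scaling parameter yields $\kappa^{\eq}\circ Q\Sigma_b^K$ in one direction (the inputs collide into a Steenrod $p$-fold marking along which the moduli looks locally like the $Q\Sigma$-setup on $V_K$) and $Q\Sigma_{\kappa(b),\tau}\circ \kappa^{\eq}$ in the other (a $\mb Z_p$-equivariant sphere with $p$ inputs bubbles off at the output, with the bulk insertions of $\tau$ coming from the strata identified above). This is the equivariant enhancement of the limiting argument in \cite[Proposition~4.8]{Seidel_Wilkins_2022}, carried out in the adiabatic spirit of Conjecture~\ref{conj18}.

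The chief obstacle will be the simultaneous enforcement of (i) transversality for the $\mb Z_p$-equivariant stable complex virtual framework and (ii) coherent gluing of the perturbation data at boundary strata with non-free $\mb Z_p$-action. The relevant fixed loci include affine vortices whose residual gauge symmetry absorbs the domain cyclic rotation (already flagged as a distinctive feature of the vortex setup in the remarks following Theorem~\ref{thmb}), and their normal bundle $\mb F_p$-Euler classes must be computed and shown to contribute exactly the $t$- and $\theta$-dependent correction terms demanded by the intertwining formula. A secondary difficulty is to verify that the $\tau$ produced by vortex boundary analysis agrees with the bulk class making $\kappa$ a morphism of cohomological field theories in Conjecture~\ref{conj18}; this should follow from a forgetful-map comparison between two natural compactifications of the one-marked vortex moduli, reducing the identification to a universal genus-zero statement on $V \qu K$.
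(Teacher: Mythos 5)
The statement you are addressing is stated in the paper as a \emph{conjecture} precisely because no proof is given there: it is the proposed generalization of Theorems~\ref{thma} and~\ref{thmb} after dropping Hypotheses~\ref{hyp3} and~\ref{hyp4}, and the paper only sketches the ingredients one would need (the integer/$\mb F_p$-valued virtual counts of \cite{Bai_Xu_2022, Bai_Xu_Arnold}, a bulk class $\tau$, and a bulk-deformed operation $Q\Sigma_{b,\tau}$). Your proposal follows the same intended strategy the paper gestures at, but it is a research plan rather than a proof, and the steps you defer are exactly the ones that are open. Concretely: (i) the stable-complex virtual framework of Bai--Xu produces integer counts for moduli without group actions, but the conjecture requires a $\mb Z_p$-\emph{equivariant} virtual perturbation scheme over the cells $\Delta_i \subset S^\infty$ that is coherent across boundary strata where the $\mb Z_p$-action on the vortex moduli is \emph{not} free --- the paper itself emphasizes (in the remarks after Theorem~\ref{thmb}) that affine vortex moduli have $\mb Z_p$-fixed points already in degree one because the domain rotation can be absorbed by gauge symmetry, so one cannot achieve equivariant transversality by perturbation and the fixed-locus contributions must be extracted from a localization-type analysis that no existing reference supplies; (ii) the operation $Q\Sigma_{b,\tau}$ on the right-hand side and the source operation $Q\Sigma_b^K$ on $QH_K^*(V)$ (an equivariant quantum Steenrod operation for a noncompact Hamiltonian $K$-manifold) are themselves not yet constructed in this generality, so the identity you aim to prove does not yet have both sides defined; (iii) your identification of $\tau$ with the CohFT bulk class of Conjecture~\ref{conj18} is asserted to ``follow from a forgetful-map comparison,'' but that comparison is itself part of the unproven quantum Kirwan map conjecture.

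In short, there is no gap in your \emph{outline} relative to what the paper envisions --- you have correctly identified the moduli-theoretic degenerations (sphere bubbling in $V$, sphere bubbling in $V\qu K$ at the output, vortex splitting) and where $\tau$ should come from --- but the proposal does not constitute a proof, because the two foundational inputs it invokes (an $\mb F_p$-equivariant virtual fundamental class compatible with non-free $\mb Z_p$-actions and with the $\Delta_i$-parametrized families, and a construction of the bulk-deformed quantum Steenrod operation beyond the semipositive case) are precisely the open problems whose resolution would turn the conjecture into a theorem. If you want a result you can actually establish with current technology, you should restrict to the setting of Hypotheses~\ref{hyp1}--\ref{hyp4}, where the paper's Theorems~\ref{thma} and~\ref{thmb} carry out your boundary analysis with $\tau = 0$ and with classical (geometric) transversality replacing the virtual machinery.
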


\subsection{Acknowledgements}

The author thanks Jae Hee Lee and Shaoyun Bai for stimulating discussions, and Mark Grant for discussions on mathoverflow. 

This work is partially supported by NSF Grant No. DMS-2345030.



\section{Affine vortices and quantum Kirwan map}\label{section2}

\subsection{Affine vortices}

\begin{defn}
An {\bf affine vortex} (with respect to a $K$-invariant $\omega_V$-compatible almost complex structure $\wh J$) is a pair $(A, u)$ where $A = d + \phi ds + \psi dt$ is a connection on the trivial $K$-bundle $P = {\mb C} \times K$ over the complex plane ${\mb C}$, $u: {\mb C} \to V$ is a smooth map such that
\begin{enumerate}
    \item The pair $(A, u)$ satisfies the vortex equation 
    \begin{align}\label{vortex_eqn}
    &\ \partial_s u + {\mc X}_\phi (u) + \wh J (\partial_t u + {\mc X}_\psi(u)) = 0,\ &\ F_A + \mu(u) ds dt = 0.
    \end{align}

    \item The energy of $(A, u)$, defined by 
    \beq
    E(A, u) = \| \partial_s u + {\mc X}_\phi(u)\|_{L^2}^2 + \| \mu(u)\|_{L^2}^2,
    \eeq
    is finite.
\end{enumerate}
\end{defn}
The group of gauge transformations is
\beqn
{\mc G}:= \{ g: {\mb C} \to K \},
\eeqn
which acts on the set of pairs $(A, u)$ and the (left) action is denoted by 
\beqn
g\cdot (A, u) = ( d -g^{-1} dg + {\rm Ad}_g (\phi) ds + {\rm Ad}_g (\psi) dt, g u).
\eeqn
The vortex equation \eqref{vortex_eqn} is invariant under gauge transformation.

It is proved by Ziltener \cite{Ziltener_Decay} that any affine vortex ``closes up'' at infinity. More precisely, for any affine vortex $(A, u)$, the continuous map $u/K: {\mb C} \to V/K$, which is independent of gauge, has a well-defined limit at infinity which is contained in $\mu^{-1}(0)/K$. Hence there is a gauge-invariant evaluation
\beqn
\ev_\infty(A, u)\in V \qu K.
\eeqn
Moreover, there exists a $K$-bundle $P \to S^2$ and a section $\hat u: S^2 \to P(V)$, which agrees with $u$ on ${\mb C} = S^2 \setminus \{\infty\}$ with respect to a suitable trivialization of $P$ away from $\infty$. On the other hand, any continuous section $\hat u: S^2 \to P(V)$ represents a spherical equivariant class $d \in \pi_2^K(V)$. 

The affine vortex equation \eqref{vortex_eqn} is invariant under translations of the domain ${\mb C}$. We usually consider moduli spaces of solutions modulo both gauge transformation and translation. For any $k \geq 0$, let 
\beqn
{\mc M}_k^{\rm vortex}(d)
\eeqn
be the moduli space of $k$ marked affine vortices with degree $d$, whose virtual dimension is 
\beqn
{\rm dim}^{\rm vir} {\mc M}_k^{\rm vortex}(d) = {\rm dim} V \qu K + 2c_1(d) + 2k - 2. 
\eeqn

\begin{prop}
Choose a class $d \in \pi_2^K(V)$.
\begin{enumerate}
\item (\cite{Ziltener_book}) The energy of any affine vortex $(A, u)$ representing $d$ is 
\beqn
E(A, u) = \langle [\omega_V + \mu], d \rangle.
\eeqn

\item (\cite{Venugopalan_Xu}) There is a Banach manifold ${\mc B}_d$, a Banach vector bundle ${\mc E}_d \to {\mc B}_d$ and a Fredholm section ${\mc F}_d: {\mc B}_d \to {\mc E}_d$ 
such that ${\mc F}^{-1}(0)$ is homeomorphic to ${\mc M}_1^{\rm vortex}(d)$. Moreover, the index of ${\mc F}_d$ is equal to 
\beqn
{\rm dim}^{\rm vir} {\mc M}_1^{\rm vortex}(d) = {\rm dim} V \qu G + 2 c_1(d).
\eeqn

\item (\cite{Venugopalan_Xu}) When ${\mc F}_d$ is transverse, ${\mc M}_1^{\rm vortex}(d)$ has the structure of a smooth manifold and the evaluation map 
\beqn
\ev_\infty: {\mc M}_1^{\rm vortex}(d) \to V \qu K
\eeqn
is a smooth map.
\end{enumerate}
\end{prop}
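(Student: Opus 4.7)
The plan is to treat the three parts sequentially, all of which follow standard patterns in symplectic gauge theory on the plane.

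For part (1), the key step is a pointwise identity of Kähler type. Expanding the squared norms of the left-hand sides of \eqref{vortex_eqn} and rearranging, together with the defining property $d\mu(\xi)\cdot \eta = \omega_V({\mc X}_\eta, \xi)$ of the moment map, yields
\beqn
e(A,u) = u^*\omega_V + d\langle \mu(u), A\rangle + \tfrac{1}{2}|\partial_s u + {\mc X}_\phi(u) + \wh J(\partial_t u + {\mc X}_\psi(u))|^2 + \tfrac{1}{2}|F_A + \mu(u)\, ds\wedge dt|^2,
\eeqn
where $e(A,u)$ is the integrand of $E(A,u)$. On a vortex the two squared terms vanish, so integrating and using Ziltener's exponential decay of $(A,u)$ near infinity (which kills the boundary contribution of the exact form) identifies $E(A,u)$ with the equivariant symplectic area. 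Pairing with the compactifying section $\wh u: S^2 \to P(V)$ then identifies this area with $\langle [\omega_V + \mu], d\rangle$.

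For part (2), I would model ${\mc B}_d$ on gauge-equivalence classes of pairs $(A,u)$ of weighted Sobolev class $W^{k,p}_\delta$ on ${\mb C}$---weighted by a small positive exponential factor $\delta$ in polar coordinates at $\infty$---representing $d$ and with asymptotic value in $\mu^{-1}(0)/K$. The bundle ${\mc E}_d$ is the sum of $(0,1)$-forms valued in $u^*TV$ with ${\mf k}$-valued $2$-forms, and ${\mc F}_d$ is the pair of vortex operators in \eqref{vortex_eqn}. Quotienting by gauge via a Coulomb slice turns the linearization into a first-order elliptic system on ${\mb C}$, and Fredholmness reduces to invertibility of its asymptotic model in polar coordinates. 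That asymptotic model is a loop-space operator whose kernel and cokernel one controls using freeness of the $K$-action on $\mu^{-1}(0)$ together with the choice of $\delta$ avoiding its discrete spectrum. The index $\dim V\qu K + 2c_1(d)$ is then computed either by Riemann--Roch applied to the compactified bundle $P \to S^2$ from part (1), or by excision comparing to a holomorphic sphere in $V \qu K$.

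For part (3), standard Sard--Smale applied to a domain-dependent family of $K$-invariant ${\wh J}$'s renders ${\mc F}_d$ transverse on a residual set, and the implicit function theorem then equips ${\mc M}_1^{\rm vortex}(d)$ with a smooth manifold structure. Smoothness of $\ev_\infty$ is the most delicate point and is what I expect to be the main obstacle: the value at $\infty$ is not a pointwise evaluation on a compact subset of the domain but an $L^\infty$-limit, so one must engineer the weighted Sobolev spaces of part (2) so that $(A,u) \mapsto \ev_\infty(A,u)$ extends to a smooth map on ${\mc B}_d$. Concretely, the asymptotic expansion of finite-energy vortices (again from Ziltener's decay estimates) lets one write $\ev_\infty$ as a composition of a smooth chart on the target with the projection $\mu^{-1}(0) \to V \qu K$, at which point smoothness follows from the chain rule in the local slice charts.
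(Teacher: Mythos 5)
The paper does not prove this Proposition at all: each part is stated with an explicit attribution (part (1) to Ziltener's book, parts (2) and (3) to Venugopalan--Xu), and no argument appears in the text. So your sketch is being compared against a citation rather than a proof. Judged on its own terms, it reconstructs the standard arguments of those references with the right key inputs in the right order: the pointwise energy identity plus Ziltener's decay theorem for (1), weighted Sobolev spaces and a Coulomb slice for (2), and Sard--Smale plus the asymptotic expansion for (3), including the correct identification of the smoothness of $\ev_\infty$ as the genuinely delicate point.

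Two places where the sketch is looser than the actual arguments. First, in (1) the exact term $d\langle \mu(u), A\rangle$ is not simply a boundary nuisance to be ``killed'': the combination $u^*\omega_V - d\langle \mu(u), A\rangle$ is precisely the Cartan-model representative of $[\omega_V + \mu]$ pulled back via the compactified section $\hat u: S^2 \to P(V)$, and the decay of $\mu(u)$ at infinity is what lets one pass between the integral over ${\mb C}$ and the pairing with $d$ on the closed surface. Without saying this, your final step (``pairing with the compactifying section identifies the area with $\langle [\omega_V+\mu], d\rangle$'') is an assertion rather than an argument. Second, affine vortices approach their asymptotic orbit only polynomially in $|z|$ (Ziltener's decay rate for the energy density is $|z|^{-4+\epsilon}$), so the weights in (2) must be powers of $(1+|z|^2)$ --- exponential only in the cylindrical coordinate $\log|z|$, not in $|z|$ itself; a genuinely exponential weight in $|z|$ would exclude the vortices and their deformations from the configuration space. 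With those caveats, your outline matches what is done in the cited sources.
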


\subsection{Compactifications}

\subsubsection{Ziltener compactification}

Ziltener \cite{Ziltener_book} provided a natural compactification of the moduli space of affine vortices. In general, given an energy bound, a sequence of solutions of the vortex equation can have energy concentrations at isolated points, causing sphere bubbles in $V$; this is excluded in our setting by assuming $V$ is contractible. There are other two types of noncompactness behaviors which can be easily described in terms of the energy distribution. First, the energy distribution could separate in different regions of the domain ${\mb C}$ which are infinitely far away, causing affine vortices to ``split.'' Second, the energy distribution could spread out over larger and larger regions; equivalently, energy escape at the infinity of ${\mb C}$ and forming sphere bubbles in the symplectic quotient $V \qu K$. To incorporate both cases, one can introduce the notion of ``stable affine vortices.'' 

We recall the combinatorial description of domain curves of stable affine vortices. A {\bf scaled tree} is a rooted tree $\Gamma$ with a set of vertices $V_\Gamma$ (corresponding to irreducible components of a nodal curve), a set of edges (corresponding to nodes), and a set of leaves $L_\Gamma$ (corresponding to interior marked points). We also assume that the root is implicitly attached with an ``outgoing'' leaf. W The vertices are ordered by the root: $v > v'$ if $v'$ is closer to the root than $v$, and $v \succ v'$ if $v > v'$ and they are adjacent. Moreover, the structure of scaled tree also contains a functor 
\beqn
\mf{scale}: (V_\Gamma, \succ) \to \{ 0 \to 1 \to \infty\}
\eeqn
satisfying the following conditions.
\begin{itemize}
\item If $\mf{scale}(v) \leq 1$, $\mf{scale}(v') \geq 1$, then along the path connecting $v$ and $v'$ there is exactly one vertex in $\mf{scale}^{-1}(1)$.

\item Leaves are attached to vertices in $\mf{scale}^{-1}(\{0, 1\})$.
\end{itemize}
A scaled tree is {\bf stable} if each $v \in \mf{scale}^{-1}(\{0,\infty\})$ has valence (both edges and leaves counted) is at leas three, and each $v' \in \mf{scale}^{-1}(1)$ has valence at least two. 

Given a scaled tree $\Gamma$, a {\bf scaled curve} of type $\Gamma$ is the union $\Sigma$ of copies of $S^2 \cong \Sigma_v$ indexed by vertices $v \in V_\Gamma$, together with marked points corresponding to (incoming) leaves attached to corresponding components. The edge connecting $v \succ v'$ corresponds to $\infty \in \Sigma_v$ and a finite point of $\Sigma_{v'}$. Two scaled curves are isomorphic if there are componentwise biholomorphisms, with the restriction that the biholomorphic map on a component corresponding to $v \in \mf{scale}^{-1}(1)$ has to be a translation of ${\mb C} = S^2\setminus \{\infty\}$. Denote by $\Sigma^{(0)}, \Sigma^{(1)}, \Sigma^{(\infty)}$ be the union of components corresponding to vertices of scale $0$, $1$, and $\infty$ respectively.

For each $k \geq 1$, let $\ov{\mc M}{}_k^{\rm scaled}$ be the moduli space of stable scaled curves with $k$ interior marked points. The specific way of defining isomorphisms result in many differences from the Deligne--Mumford space $\ov{\mc M}_k$, for example, in dimension. In fact, the top stratum of $\ov{\mc M}{}_k^{\rm scaled}$ can be identified as the moduli space of $k$ distinct points of ${\mb C}$ modulo translation, hence has dimension $2k - 2$. 

A stable affine vortex has a domain $\Sigma$ being a scaled curve of certain combinatorial type $\Gamma$. On each component $\Sigma_v \subset \Sigma^{(0)}$, it is a $\wh J$-holomorphic sphere $u_v: \Sigma_v \to V$; on each component $\Sigma_v \subset \Sigma^{(1)}$, it is an affine vortex $(A_v, u_v)$ with domain $\Sigma_v$; on each component $\Sigma_v \subset \Sigma^{(\infty)}$, it is a $J$-holomorphic sphere $u_v: \Sigma_\gamma \to V \qu K$, where $J$ is the induced almost complex structure on $V \qu K$. A matching condition is required for all nodes; in particular, for $v \in \mf{scale}^{-1}(1)$ adjacent to $v' \in \mf{scale}^{-1}(\infty)$ with a node having coordinate $w_{vv'}\in \Sigma_{v'}$, one needs to require
\beqn
\ev_\infty( A_v, u_v) = u_{v'} (w_{vv'}) \in V \qu K.
\eeqn

Ziltener \cite{Ziltener_book} showed that the collection of all equivalence classes of stable affine vortices of degree $d$ with $k$ marked points, denoted by $\ov{\mc M}{}_k^{\rm vortex}(d)$, is a natural compactification of the moduli space ${\mc M}{}_k^{\rm vortex}(d)$. 

One needs to allow the almost complex structure to depend on the moduli parameter in $\ov{\mc M}{}_k^{\rm scaled}$ or other parameters. Let $\ov{\mc C}{}_k^{\rm scaled} \to \ov{\mc M}{}_k^{\rm scaled}$ be the universal curve, which is a smooth manifold away from nodes. Suppose $\wh J$ is a family of domain-dependent almost complex structures on $V$ depending on points in $\ov{\mc C}{}_k^{\rm scaled}$ which is a constant near nodes. Then one can consider stable affine vortices defined by $\wh J$; on unstable components the equations are for certain constant almost complex structure.

\subsubsection{Cusp compactification}

In order to use pseudocycles inside the Borel construction, we introduce a different compactification called the {\bf cusp compactification}. It is similar to the compactification by cusp curves used by Gromov \cite{Gromov_1985}, where we replace a multiply-covered component by its underlying simple curve.

\begin{defn}
\begin{enumerate}

\item Let $\Gamma$ be a scaled tree (which is not necessarily stable). Its {\bf trimming}, denoted by $\Gamma^\vee$, is the scaled tree obtained by removing all vertices and edges of $\Gamma$ which do not belong to any maximal path connecting a leaf and the root.

\item The trimming of a scaled curve $(\Sigma, {\bf z})$ with underlying scaled tree $\Gamma$, denoted by $(\Sigma^\vee, {\bf z}^\vee)$, is the scaled curve obtained by removing all components $\Sigma_v$ with $v \notin V_{\Gamma^\vee}$. 

\item Let $(A, u, {\bf z})$ be a marked stable affine vortex over the curve $(\Sigma, {\bf z})$. Its {\bf trimming} is the stable affine vortex denoted by $(A^\vee, u^\vee, {\bf z}^\vee)$, whose domain is $(\Sigma^\vee, {\bf z}^\vee)$ and whose components are defined as follows: if $\Sigma_v \subset \Sigma^\vee$ is stable, then $(A_v^\vee, u_v^\vee) = (A_v, u_v)$; if $\Sigma_v \subset \Sigma^\vee$ is unstable (which is necessarily a nonconstant holomorphic sphere in $V \qu K$), then $u_v^\vee: \Sigma_v^\vee \to V \qu K$ is the underlying simple curve.
\end{enumerate}
\end{defn}

The trimming of stable affine vortices defines an equivalence relation on the moduli space. For each $k$, denote 
\beqn
\widecheck {\mc M}_k^{\rm vortex}(d) := \ov{\mc M}{}_k^{\rm vortex}(d)/ {\rm trimming}
\eeqn
equipped with the quotient topology. We call this quotient the {\bf cusp compactification} of the moduli space of affine vortices (with $k$ marked points).  Notice that any element of $\wc {\mc M}_k^{\rm vortex}(d)$ is also an equivalence class of stable affine vortices of possibly different degrees. The following statement is easy to verify.

\begin{lemma}
The cusp compactification $\wc {\mc M}{}_k^{\rm vortex}(d)$ is compact and Hausdorff.
\end{lemma}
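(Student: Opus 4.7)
The plan is to treat compactness and the Hausdorff property separately. For compactness, it suffices to invoke Ziltener's compactification result, which gives that $\ov{\mc M}{}_k^{\rm vortex}(d)$ is compact, together with the observation that $\wc{\mc M}{}_k^{\rm vortex}(d)$ is by definition its quotient under the trimming equivalence, equipped with the quotient topology. The quotient map is continuous, so the continuous image of a compact space is compact.

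For the Hausdorff property, the strategy is to exhibit the trimming operation as a continuous map
\[
T : \ov{\mc M}{}_k^{\rm vortex}(d) \longrightarrow \bigsqcup_{d' \leq d} \ov{\mc M}{}_k^{\rm trim}(d')
\]
into a disjoint union of Ziltener moduli of trimmed stable affine vortices (those in which all scale-$\infty$ components are simple and no component is a dead end). Since the target is Hausdorff, continuity of $T$ forces the kernel relation $\{(x,y): T(x) = T(y)\}$ to be closed in $\ov{\mc M}{}_k^{\rm vortex}(d) \times \ov{\mc M}{}_k^{\rm vortex}(d)$. Combined with the compactness just established, a standard topological argument then yields the Hausdorff property: the fibers of $T$ are closed, so the quotient map is closed, and closed equivalence relations on compact Hausdorff spaces produce Hausdorff quotients.

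Continuity of $T$ has two essentially independent ingredients. The combinatorial trimming, which deletes vertices not lying on any maximal root-to-leaf path, is continuous under Ziltener convergence because such convergence only creates new bubble components or collapses existing ones into nodes; new components lying on a root-to-leaf path survive trimming in the limit, while newly created ghost components are themselves removed in the limit. The analytic trimming, which replaces a multiply-covered $V \qu K$-sphere by its underlying simple curve, is continuous by the classical principle that underlying simple curves of a Gromov-convergent sequence of $J$-holomorphic spheres themselves Gromov-converge (possibly to a tree of simple curves) to the underlying simple curves of the limit, a fact from the construction of pseudocycles in Gromov--Witten theory.

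The main obstacle is the second ingredient: two elements of the sequence can have the same trimming via different multiple-cover decompositions that a priori degenerate differently, and one must verify these decompositions converge compatibly in the limit. This is controlled by the uniqueness of factorization through an underlying simple curve up to reparametrization, together with the energy identity $E(A,u) = \langle [\omega_V + \mu], d\rangle$ which caps the total multiplicity that can appear. With these two continuity points established, the remaining verifications are routine consequences of Ziltener's local models around the boundary strata of $\ov{\mc M}{}_k^{\rm vortex}(d)$.
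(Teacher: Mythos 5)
The compactness half is fine: the quotient map is continuous and surjective, so compactness of Ziltener's $\ov{\mc M}{}_k^{\rm vortex}(d)$ passes to the quotient. The framework you choose for the Hausdorff half --- show the trimming equivalence relation is closed in $\ov{\mc M}{}_k^{\rm vortex}(d) \times \ov{\mc M}{}_k^{\rm vortex}(d)$ and then use that a closed equivalence relation on a compact Hausdorff space has Hausdorff quotient --- is also the right one. (The paper offers no written proof, so there is nothing to compare against except the assertion that the statement is easy.)

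The gap is in how you establish closedness. The map $T$ into $\bigsqcup_{d' \leq d} \ov{\mc M}{}_k^{\rm trim}(d')$ is \emph{not} continuous, so the ``closed kernel of a continuous map'' mechanism is unavailable. Concretely: take a sequence of configurations each equal to its own trimming (e.g.\ smooth affine vortices, or configurations whose sphere bubbles are simple) that Ziltener-converges to a limit containing a multiply covered sphere in $V\qu K$. Then $T(x_n)=x_n$ lies in the degree-$d$ piece of the target, while $T(\lim x_n)$ is the trimmed limit, which has strictly smaller degree and energy and hence lies in a \emph{different} clopen piece of the disjoint union; since Gromov/Ziltener convergence preserves energy, $T(x_n)$ cannot converge to $T(\lim x_n)$. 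For the same reason, the ``classical principle'' you invoke --- that underlying simple curves of a Gromov-convergent sequence converge to the underlying simple curves of the limit --- is false precisely at multiple covers: a sequence of simple degree-$2e$ spheres can converge to a double cover of a degree-$e$ sphere, and the underlying simple curves (the spheres themselves, of energy $\sim 2e$) do not converge to the degree-$e$ base. What is actually needed, and what your argument does not supply, is the weaker statement that if $x_n \to x$, $y_n \to y$, and $x_n^\vee = y_n^\vee$ for all $n$, then $x^\vee = y^\vee$; this requires tracking how the \emph{common} trimming $x_n^\vee = y_n^\vee$ constrains the two (a priori different) degenerations, e.g.\ by comparing images and local multiplicities of the limiting covers component by component, rather than asserting continuity of $x \mapsto x^\vee$, which fails. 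Closedness of the relation is genuinely weaker than continuity of $T$, and your proof only addresses the latter.
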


The cusp compactification is also stratified by the combinatorial types. Each stratum, denoted by $\alpha$, is a combinatorial type of a stable affine vortex of degree $d_\alpha$ with $c_1(d_\alpha) \leq c_1(d)$. 

\begin{lemma}
For generic domain-dependent almost complex structure $\wh J$, each stratum $\partial^\alpha \wc{\mc M}{}_k^{\rm vortex}(d)$ is a smooth manifold. Moreover, under the monotonicity assumption, for each boundary stratum $\alpha$, one has
\beqn
{\rm dim} \partial^\alpha \wc{\mc M}{}_k^{\rm vortex}(d) \leq {\rm dim}^{\rm vir} \wc{\mc M}{}_k^{\rm vortex}(d) - 2
\eeqn
\end{lemma}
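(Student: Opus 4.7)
The plan is to unpack each boundary stratum combinatorially, invoke the standard transversality package componentwise to get smoothness, and then carry out a Chern-class dimension count that exploits monotonicity.

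First, since $V$ is contractible (Hypothesis \ref{hyp3}), the Ziltener compactification has no scale-$0$ components, so after trimming an element of $\partial^\alpha \wc{\mc M}{}_k^{\rm vortex}(d)$ is built from $N_\alpha \geq 1$ affine vortex components of degrees $d_1,\dots,d_{N_\alpha}$ joined through a tree of $M_\alpha \geq 0$ \emph{simple} pseudoholomorphic spheres in $V \qu K$ of degrees $e_1,\dots,e_{M_\alpha}$; write $d_\alpha = \sum d_i + \sum e_j$. The top stratum is $(N_\alpha,M_\alpha) = (1,0)$, and any proper stratum must have $M_\alpha \geq 1$ since two scale-$1$ vertices cannot be adjacent in a scaled tree. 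To obtain the smooth structure I would combine three standard inputs: the Fredholm package of Venugopalan--Xu applied componentwise to each vortex factor, McDuff--Salamon transversality applied to each simple sphere in $V \qu K$ (simplicity is the very reason for passing to the cusp compactification), and generic domain-dependence of $\wh J$ near the nodes to make the fiber-product evaluations transverse.

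Next I would count dimensions. Each scale-$1$ vortex of degree $d_i$ with $k_i^{(1)}$ interior markings contributes $\dim(V\qu K) + 2c_1(d_i) + 2k_i^{(1)} - 2$ (translation already quotiented out), each scale-$\infty$ simple sphere of degree $e_j$ with $k_j^{(\infty)}$ markings and $n_j$ node-points contributes $\dim(V\qu K) + 2c_1(e_j) + 2(k_j^{(\infty)}+n_j) - 6$, and each of the $N_\alpha + M_\alpha - 1$ nodes costs $\dim(V\qu K)$ in the fiber product. Using the tree identity $\sum_j n_j = 2(M_\alpha - 1) + N_\alpha$ valid for $M_\alpha \geq 1$, these telescope to
\beqn
\dim \partial^\alpha \wc{\mc M}{}_k^{\rm vortex}(d) \;=\; \dim(V\qu K) + 2 c_1(d_\alpha) + 2k - 2M_\alpha - 4.
\eeqn

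Finally, energy conservation in the original (untrimmed) Ziltener compactification reads $d = \sum d_i + \sum m_j e_j$, where $m_j \geq 1$ is the multiplicity of the originally bubbled sphere replaced by the simple curve $e_j$, so Hypothesis \ref{hyp4} yields $c_1(d) - c_1(d_\alpha) = \sum_j (m_j - 1) c_1(e_j) \geq 0$ since $c_1(e_j) \geq 0$ on bubbles. Subtracting from the virtual dimension $\dim(V\qu K) + 2c_1(d) + 2k - 2$ produces codimension $2(c_1(d) - c_1(d_\alpha)) + 2M_\alpha + 2 \geq 2M_\alpha + 2 \geq 4$ for any proper boundary stratum, which is comfortably stronger than the asserted bound of $2$. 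The main obstacle I anticipate is the combinatorial bookkeeping across all scaled-tree types together with the joint transversality at the node matchings; the monotonicity inequality itself is automatic once the dimension formula is in place.
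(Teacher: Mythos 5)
Your overall strategy --- componentwise transversality for simple spheres together with a Chern-class dimension count exploiting monotonicity --- is exactly the route the paper takes (its proof is two sentences), and the smoothness discussion is fine modulo the usual caveats about non-simple stable configurations that both you and the paper suppress. However, your dimension count contains a concrete error: you omit the outgoing marked point. In every proper boundary stratum the root vertex is a scale-$\infty$ sphere, and it carries the output at $\infty \in \Sigma_{\rm root}$, the point where $\ev_\infty$ is evaluated; this is a special point that isomorphisms must preserve, so it contributes $+2$ to the parametrized moduli before quotienting by ${\rm PSL}(2,{\mb C})$. Your tally $\sum_j n_j = 2(M_\alpha-1)+N_\alpha$ does not include it, so your stratum dimension is too small by $2$, and the codimension $2(c_1(d)-c_1(d_\alpha))+2M_\alpha+2$ should be $2(c_1(d)-c_1(d_\alpha))+2M_\alpha$.

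Concretely, take $k=1$ and the stratum where a single degree-$d_1$ vortex carrying the marked point is attached to one simple sphere of degree $e_1$ in $V\qu K$ with $d = d_1+e_1$: the vortex contributes ${\rm dim}\, V\qu K + 2c_1(d_1)$, the sphere with \emph{two} special points (the node and the output) contributes ${\rm dim}\, V\qu K + 2c_1(e_1) + 4 - 6$, and the node matching cuts ${\rm dim}\, V \qu K$, for a total of ${\rm dim}\, V\qu K + 2c_1(d) - 2$, i.e.\ codimension exactly $2$. A map-free sanity check: in $\ov{\mc M}{}_2^{\rm scaled}\cong S^2$ the configuration with $z_0-z_1=\infty$ is a single point, codimension $2$ in a $2$-dimensional space, whereas your count would assign the root sphere (three special points by your reckoning minus the forgotten output, hence two) a negative dimension. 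These codimension-$2$ strata are nonempty in general --- energy escaping to infinity and bubbling a sphere in $V\qu K$ is precisely the phenomenon that produces the quantum corrections to the Kirwan map and the terms $\kappa_{d_0}(\wh x_0)\star_{d_\infty}\kappa_{d_1}(\wh x_1)$ later in the paper --- so your conclusion that every proper stratum has codimension at least $4$ is false. The corrected bound, codimension $\geq 2$ with equality achieved when $M_\alpha = 1$ and no multiple covers occur, is exactly (and sharply) what the lemma asserts, so the statement survives, but the "comfortably stronger" margin you found is an artifact of the miscount.
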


\begin{proof}
Constant almost complex structures on $V \qu K$ can make simple holomorphic stable spheres transverse. Moreover, all kinds of degeneration  increase the codimension by at least two.
\end{proof}

\subsection{The Poincar\'e bundles}

The purpose here is to define natural principal bundle, one for each marked point, over the moduli space of affine vortices. We first explain the definition when the underlying marked scaled curve $(\Sigma, {\bf z})$ is smooth and fixed. Here $\Sigma \cong {\mb C}$ and ${\bf z}$ is a list of $k$ marked points. Let $\tilde {\mc M}_{(\Sigma, {\bf z})}^{\rm vortex}(d)$ be the set of solutions $(A, u)$ to the affine vortex equation on $(\Sigma, {\bf z})$. Let 
\beqn
{\mc G}_{(\Sigma, {\bf z})}
\eeqn
be the group of smooth gauge transformations on the domain $\Sigma = {\mb C}$; for each $j$, let 
\beqn
{\mc G}_{z_j} \subset {\mc G}_{(\Sigma, {\bf z})}
\eeqn
be the subgroup of gauge transformations whose value at $z_j$ is the identity of $K$. Then 
\beqn
{\mc G}/ {\mc G}_{z_j} \cong K
\eeqn
which can be identified with the group of constant gauge transformations. Then the quotient
\beqn
\tilde {\mc M}_{(\Sigma, {\bf z})}^{\rm vortex}(d)/ {\mc G}_{z_j}
\eeqn
is a $K$-bundle over the moduli space 
\beqn
{\mc M}_{(\Sigma, {\bf z})}^{\rm vortex} (d) = \tilde {\mc M}_{(\Sigma, {\bf z})}^{\rm vortex}(d)/ {\mc G}_{(\Sigma, {\bf z})}
\eeqn
because the action by constant gauge transformations is free.

The above notion extends to the case when $(\Sigma, {\bf z})$ is a general scaled marked curve (not necessarily stable). If the underlying scaled tree is $\Gamma$, then ${\mc G}_{(\Sigma, {\bf z})}$ is the group of gauge transformations on 
\beqn
\Sigma^{(1)}:= \bigcup_{\mf{scale}(v) = 1} \Sigma_v.
\eeqn
Then for each marked point $z_j$, one has a $K$-bundle constructed similarly, called the Poincar\'e bundle over the moduli space ${\mc M}_{(\Sigma, {\bf z})}^{\rm vortex}(d)$. 

One can allow $(\Sigma, {\bf z})$ (or its stabilization) to vary in a given stratum of the domain moduli and one obtains a Poincar\'e bundle over the corresponding stratum of the vortex moduli. For each such stratum $\alpha$ and $j = 1, \ldots, k$, denote by 
\beqn
\partial^\alpha \ov{\mc P}_j \to \partial_\alpha \ov{\mc M}{}_k^{\rm vortex}(d)
\eeqn
the obtained Poincar\'e bundle. Denote by 
\beqn
\ov{\mc P}_j:= \bigsqcup_\alpha \partial^\alpha \ov{\mc P}_j \to \ov{\mc M}{}_k^{\rm vortex}(d).
\eeqn
The same construction can be carried over to the cusp compactification because the cusp compactification contains configurations which only collapse holomorphic spheres downstairs or vortex components which do not have marked points. Denote the corresponding Poincar\'e bundles to be 
\beqn
\wc {\mc P}_j \to \wc {\mc M}{}_k^{\rm vortex}(d).
\eeqn

\begin{example}
Consider the moduli space ${\mc M}_1^{\rm vortex}(d_0)$ with $d_0 = 0 \in \pi_2^K(V)$, i.e., constant vortices. It is compact itself and is homeomorphic to $V \qu K$. The Poincar\'e bundle ${\mc P}_1 \to {\mc M}_1^{\rm vortex}(d_0)$ is the bundle $\mu^{-1}(0) \to V \qu K$ which is generally nontrivial. 
\end{example}

\begin{lemma}
The Poincar\'e bundle $\wc {\mc P}_j \to \wc{\mc M}{}_k^{\rm vortex}(d)$ admits a classifying map into a finite-dimensional approximation $B_n K$ of the classifying space $BK$.
\end{lemma}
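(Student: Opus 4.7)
The plan is to combine compactness of $\wc{\mc M}{}_k^{\rm vortex}(d)$, established in the preceding lemma, with the standard Milnor-style construction of classifying maps, to reduce to a finite-dimensional target.

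First I would fix a faithful unitary embedding $K \hookrightarrow U(m)$, and take as the finite-dimensional approximation
\beqn
E_n K := V_m(\mb C^{n+m}),\qquad B_n K := E_n K/K,
\eeqn
the Stiefel manifold of orthonormal $m$-frames in $\mb C^{n+m}$ together with its free right $K$-action. Then $B_n K$ is a smooth closed manifold, $E_n K$ is $(2n-1)$-connected, and the natural inclusions $E_n K \hookrightarrow E_{n+1}K$ form a directed system whose colimit is a model of $EK \to BK$. In particular, the pullback along the inclusion $B_n K \hookrightarrow BK$ sets up a bijection between principal $K$-bundles on a compact base and homotopy classes of maps into $B_n K$, for $n$ sufficiently large.

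Next I would build the classifying map explicitly. Since $\wc{\mc M}{}_k^{\rm vortex}(d)$ is compact Hausdorff, I pick a finite open cover $\{U_i\}_{i=1}^N$ over each of which the Poincar\'e bundle $\wc{\mc P}_j$ admits a local section $\sigma_i$, together with a subordinate partition of unity $\{\rho_i\}$. The vector bundle associated to $\wc{\mc P}_j$ via the representation $K \hookrightarrow U(m)$ embeds as a sub-bundle of the trivial bundle $\wc{\mc M}{}_k^{\rm vortex}(d) \times \mb C^{mN}$ by the standard formula using $\{\rho_i^{1/2}\}$ and the transition cocycle of the $\sigma_i$. The induced map on frames gives a $K$-equivariant classifying map $\wc{\mc P}_j \to E_n K$ with $n \geq mN$, whose quotient is the desired classifying map $\wc{\mc M}{}_k^{\rm vortex}(d) \to B_n K$.

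The main obstacle is not in the abstract classifying-space theory, but in verifying that the Poincar\'e bundle $\wc{\mc P}_j$ is actually a principal $K$-bundle in the topological (locally trivial) sense across the boundary strata of the cusp compactification. On each stratum local triviality is clear from the slice theorem for the gauge action with the marked-point gauge fixing. The nontrivial point is continuity of local trivializations as the combinatorial type of the domain changes: one must show that the evaluation of the residual gauge freedom at $z_j$ depends continuously on the stable vortex in Ziltener's Gromov-type topology. This follows from the fact that trimming only modifies components on which $z_j$ does not lie, and Ziltener's convergence theorem gives uniform $C^0$ convergence of the vortex in Coulomb gauge on compact neighborhoods of marked points; hence the $K$-structure at the marked point varies continuously, and the finite cover above exists. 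Once this is in place, the classifying-map construction runs through as described.
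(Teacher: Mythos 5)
Your proof is correct, but it takes a different route from the paper's. The paper argues in two steps: first it invokes Dold's theorem that numerable principal bundles are classified by $BK$ (numerability being automatic here since $\wc{\mc M}{}_k^{\rm vortex}(d)$ is compact Hausdorff), producing a classifying map into the infinite-dimensional $BK$; it then compresses this map into a finite approximation $B_n K$ by an induction over the finitely many strata, using that each stratum is a smooth manifold and that a neighborhood of $B_n K$ in $BK$ deformation retracts onto $B_n K$. You instead bypass the infinite-dimensional intermediary entirely: compactness gives a finite trivializing cover, and the standard partition-of-unity Gauss-map construction lands directly in a Stiefel-manifold model of $E_n K$ with $n$ controlled by the number of cover elements. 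Your route is more elementary and more explicit about where the finite bound on $n$ comes from; the paper's route is softer but its stratum-by-stratum induction dovetails with the later perturbation argument (the analogue of the transversality lemma for $\wc{\mc M}{}_1^{\rm vortex}(d;\wh x_0, x_\infty)$), where the classifying map is again adjusted stratum by stratum. A genuine merit of your write-up is that you isolate the real issue both arguments rest on, namely that $\wc{\mc P}_j$ is locally trivial as a topological principal $K$-bundle across changes of combinatorial type in the cusp compactification --- the paper's proof silently assumes this (numerability already presupposes local trivializations over a numerable cover), whereas you at least sketch why the residual $K$-freedom at $z_j$ varies continuously in the Gromov-type topology. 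That sketch is itself not a complete argument, but it is no less rigorous than what the paper offers, and it correctly identifies trimming away from the component carrying $z_j$ and the locally uniform convergence near marked points as the relevant inputs.
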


\begin{proof}
The usual statement for the existence of classifying maps assumes the base of the principal bundle to be a CW complex, which is not necessarily the case for $\wc {\mc M}:= \wc{\mc M}{}_k^{\rm vortex}(d)$. However, the classifying map actually exists for numerable bundles (see \cite[Section 7]{Dold_1963}). As the moduli spaces are compact, all principal bundles are numerable hence the classifying maps do exist.

Moreover, the moduli space $\wc{\mc M}$ is stratified by finitely many smooth manifolds. Inductively, suppose we can perturb a classifying map so that its restriction to all boundary strata of a stratum $\partial^\alpha \wc {\mc M}$, denoted by $\partial( \partial^\alpha \wc {\mc M})$, takes value in $B_n K$. As a neighborhood of $B_n K$ in $BK$ retracts to $B_n K$, one can homotopy the classifying map so that over a neighborhood of $\partial(\partial^\alpha \wc{\mc M})$ the value is contained in $B_n K$. On the other hand, as the interior of $\partial^\alpha \wc {\mc M}$ is a manifold, one can choose the classifying map so that it stays in $B_{n'} K$ for a possibly larger $n' \geq n$. As there are only finitely many strata, the claim follows. 
\end{proof}





\subsection{The quantum Kirwan map via Morse model}

\begin{lemma}
Fix the Morse--Smale pair $(f_\infty, g_\infty)$ on $V \qu K$. There exists a domain-dependent almost complex structure $\wh J$ such that for all $d\in \pi_2^K(X)$, the restriction of $\ev_\infty$ to each stratum $\partial^\alpha \wc{\mc M}{}_1^{\rm vortex}(d)$ is transverse to all stable manifolds of $f_\infty$. 
\end{lemma}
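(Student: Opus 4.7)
The plan is to invoke a standard Sard--Smale/Baire-category argument in the space of domain-dependent almost complex structures. Introduce a separable Banach manifold $\mc J$ of $C^\ell$-smooth families $\wh J = \{\wh J_z\}_{z \in \ov{\mc C}{}_1^{\rm scaled}}$ of $K$-invariant, $\omega_V$-compatible almost complex structures on $V$ which are locally constant in a neighborhood of each node and agree with the convex structure $\wh J_V$ of Hypothesis \ref{hyp2} outside a large $K$-invariant compact set, so that the Gaio--Salamon--Ziltener compactness theorem continues to apply and the cusp compactifications $\wc{\mc M}{}_1^{\rm vortex}(d)$ remain compact Hausdorff. The induced family on $V \qu K$ then governs the almost complex structure on the $\infty$-scale components.

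Fix a degree $d \in \pi_2^K(V)$, a combinatorial stratum $\alpha$, and a critical point $p$ of $f_\infty$ with stable manifold $W^s(p) \subset V \qu K$. Form the universal moduli
\beqn
\partial^\alpha \wc{\mc M}{}_1^{\rm univ}(d) = \{(x, \wh J) : x \text{ is a trimmed stable } \wh J\text{-affine vortex of type } \alpha\}
\eeqn
inside $\partial^\alpha \wc{\mc M}{}_1^{\rm vortex}(d) \times \mc J$. Applying the Venugopalan--Xu Fredholm framework on each affine vortex component, together with the classical parametric transversality for simple $J$-holomorphic spheres in $V \qu K$ on the $\infty$-scale components --- all of which are simple by definition of trimming --- shows that the linearization of the universal section is surjective, so $\partial^\alpha \wc{\mc M}{}_1^{\rm univ}(d)$ is a Banach manifold. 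By freely varying $\wh J$ in a neighborhood of the unique marked leaf, the universal evaluation $\ev_\infty$ is a submersion; hence $(\ev_\infty)^{-1}(W^s(p))$ is a Banach submanifold, and Sard--Smale applied to its projection to $\mc J$ yields a comeager set $\mc J(d, \alpha, p) \subset \mc J$ of parameter values for which the restriction of $\ev_\infty$ to $\partial^\alpha \wc{\mc M}{}_1^{\rm vortex}(d)$ is transverse to $W^s(p)$.

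Because degrees $d$ form a countable set, each cusp compactification has only finitely many strata $\alpha$, and $f_\infty$ has only finitely many critical points $p$, the intersection $\mc J_{\rm reg} = \bigcap_{d, \alpha, p} \mc J(d, \alpha, p)$ is still comeager, and any $\wh J \in \mc J_{\rm reg}$ satisfies the conclusion of the lemma. A standard Taubes-style trick promotes $C^\ell$-regularity to $C^\infty$.

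The main technical obstacle is verifying surjectivity of the linearized universal operator on mixed strata where affine vortex components and holomorphic sphere components in $V \qu K$ are glued along node-matching conditions $\ev_\infty(A_v, u_v) = u_{v'}(w_{v v'})$: one must produce variations of $\wh J$ supported near a chosen marked leaf or somewhere-injective point of a sphere component, while respecting node-constancy, $K$-invariance, and the convexity constraint at infinity that define $\mc J$. The trimming construction is exactly what makes this work: it ensures that every sphere component downstairs is simple, so the McDuff--Salamon somewhere-injectivity argument applies on the $\infty$-scale pieces, while domain-dependence of $\wh J$ on the $1$-scale pieces supplies the required freedom on the affine vortex components.
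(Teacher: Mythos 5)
The paper states this lemma without proof, so there is nothing to compare line by line; your Sard--Smale strategy over a Banach manifold of domain-dependent almost complex structures, run stratum by stratum with a countable intersection at the end, is exactly the standard argument the paper is implicitly invoking (via the Venugopalan--Xu Fredholm theory on the vortex components and the monotone McDuff--Salamon theory on the trimmed, hence simple, sphere components downstairs). The overall architecture is fine, and your observation that trimming is what rescues simplicity on the $\infty$-scale pieces is the right one.

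There is, however, one step where your justification points at the wrong mechanism, and it happens to be the step carrying the real analytic content: the claim that ``by freely varying $\wh J$ in a neighborhood of the unique marked leaf, the universal evaluation $\ev_\infty$ is a submersion.'' The marked leaf is $z_0 = 0$, the interior marked point used for the evaluation $\ev_0$ into the Borel construction; perturbing $\wh J$ there has no direct bearing on $\ev_\infty$, which is the asymptotic limit of $u/K$ at the end of $\mb C$. Moreover one cannot localize perturbations of $\wh J$ ``near $\infty$'' either, since the almost complex structure must be fixed (and equal to the convex $\wh J_V$) outside a compact set for Ziltener's decay estimates and the compactness theorem to hold. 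The correct argument has two ingredients: (i) surjectivity of the universal linearized operator, which does follow from domain-dependence of $\wh J$ (every nonconstant vortex is ``somewhere injective'' relative to domain-dependent perturbations, since $\partial_s u + {\mc X}_\phi(u) \neq 0$ on an open dense set, plus unique continuation); and (ii) the separate statement that $d\ev_\infty$ restricted to the kernel of the already-surjective universal operator surjects onto $T(V\qu K)$ --- equivalently, that the operator augmented by the asymptotic evaluation in the weighted Sobolev framework of Venugopalan--Xu is surjective. Point (ii) is not a local perturbation argument at a marked point; it rests on the asymptotic analysis at the cylindrical end (on strata whose root lies at scale $\infty$ it instead reduces to the classical submersivity of evaluation maps for simple spheres in $V\qu K$, where trimming saves you). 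You should either cite this as an input from Venugopalan--Xu or supply the weighted-space argument; as written, the sentence quoted above would not survive scrutiny.
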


Fix a $\wh J$ satisfying the above lemma. For each sufficiently large $n$, choose a Morse--Smale pair $(\wh f_0, \wh g_0)$ on $B_n K$. For any classifying map 
\beqn
\wh\rho_0: \wc{\mc M}{}_1^{\rm vortex}(d) \to B_n K,
\eeqn
define
\beqn
\begin{split}
\wc{\mc M}{}_1^{\rm vortex}(d; \wh x_0, x_\infty) = &\ \wc {\mc M}{}_1^{\rm vortex}(d) \cap \wh\rho_0^{-1}( W^u(\wh x_0)) \cap \ev_\infty^{-1}(W^s(x_\infty)) \\
\cong &\ \left\{(u, \wh y_0, y_\infty)\ \left|\begin{array}{c} u \in \wc{\mc M}{}_1^{\rm vortex}(d),\  \wh y_0: I_0 \to B_n K,\ y_\infty: I_\infty \to V \qu K,\\
\wh y_0' +\nabla \wh f_0(\wh y_0) = 0,\ \wh y_0(\infty) = \wh x_0,\ \wh y_0(0) = \wh \rho_0(u),\\
y_\infty' + \nabla f_\infty(y_\infty) = 0,\ y_\infty(+\infty) = x_\infty,\ \ev_\infty(u) = y_\infty(0).
\end{array} \right. \right\}
\end{split}
\eeqn
Here $I_0 = (-\infty, 0]$ and $I_\infty = [0, +\infty)$. We also allow $\wh y_0$ resp. $y_\infty$ to have breakings. The stratification of the above moduli space is indexed by $\alpha$ containing the combinatorial type of the affine vortex and the breakings of the trajectories.

We look at moduli spaces of virtual dimension at most one. It is easy to see that the expected dimension of $\wc{\mc M}{}_1^{\rm vortex}(d; \wh x_0, x_\infty)$ is 
\beqn
{\rm dim}^{\rm vir} \wc{\mc M}{}_1^{\rm vortex}(d; \wh x_0, x_\infty) = 2c_1(d) - |\wh x_0| + |x_\infty|
\eeqn
where $|\wh x_0|$, $|x_\infty|$ are the cohomological degrees, i.e., the complements of Morse indices.

\begin{lemma}\label{lemma29}
When $2c_1(d) - |\wh x_0| + |x_\infty| \leq 1$, one can perturb the classifying map so that the following conditions are satisfied.
\begin{enumerate}
\item When the virtual dimension is negative, $\wc {\mc M}{}_1^{\rm vortex}(d; \wh x_0, x_\infty)$ is empty.

\item When the virtual dimension is zero, $\wc {\mc M}{}_1^{\rm vortex}(d; \wh x_0, x_\infty)$ consists of finitely many points which do not contain broken trajectories or singular affine vortices.

\item When the virtual dimension is one, $\wc {\mc M}{}_1^{\rm vortex}(d; \wh x_0, x_\infty)$ is a compact 1-dimensional manifold with boundary, with boundary points corresponding to once-broken configurations without singular affine vortices.
\end{enumerate}
\end{lemma}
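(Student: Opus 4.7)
The plan is to carry out a standard transversality argument using the degrees of freedom remaining after the preceding lemma. That lemma fixes $\wh J$ so that on every stratum $\partial^\alpha \wc{\mc M}{}_1^{\rm vortex}(d)$ the evaluation $\ev_\infty$ is already transverse to all stable manifolds $W^s(x_\infty)$ of $f_\infty$. After choosing a generic Morse--Smale pair $(\wh f_0, \wh g_0)$ on the approximation $B_n K$, the only remaining freedom is a perturbation of the classifying map $\wh\rho_0$ within its homotopy class, together with the usual genericity of the two sets of Morse data. The goal is to arrange that $(\wh\rho_0, \ev_\infty)$ is stratumwise transverse to every product $W^u(\wh x_0) \times W^s(x_\infty) \subset B_n K \times V \qu K$, and that broken trajectories at the two ends are cut out transversely by the Morse--Smale conditions.

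I would build the perturbed classifying map stratum by stratum, in the same inductive fashion used in the proof that $\wh\rho_0$ exists at all. On the top stratum, $\wh\rho_0$ can be freely perturbed in its homotopy class to make it transverse to each $W^u(\wh x_0)$; this is a standard jet-transversality statement, and only finitely many classes $\wh x_0$ need to be considered for any bounded cohomological degree. Having fixed the perturbation on an open neighborhood of the union of lower-dimensional strata, one extends it relative to that neighborhood to each successive stratum by the same argument. Compactness of each stratum and finiteness of the stratification guarantee that the induction terminates.

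The dimension bookkeeping is then routine. On the top stratum the cut-down space has the expected dimension $2c_1(d) - |\wh x_0| + |x_\infty|$. On any vortex-degeneration stratum $\partial^\alpha$ the preceding lemma supplies an extra codimension of at least $2$, and every Morse trajectory breaking at either end drops the dimension by a further one. Combining these estimates, negative virtual dimension forces emptiness; virtual dimension zero allows only top-stratum vortices with unbroken trajectories; and virtual dimension one admits exactly two candidate codimension-one configurations, namely a once-broken trajectory attached to a top-stratum vortex, or a vortex-degeneration stratum, the latter of which has codimension $\geq 2$ and so does not appear as a boundary point of the 1-manifold.

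The main obstacle, as in all such statements, is compactness of the virtual-dimension-one cut-down moduli, from which the finiteness in (2) and the manifold-with-boundary structure in (3) follow. This reduces to combining Ziltener's sequential compactness theorem for stable affine vortices, passed to the trimmed cusp compactification $\wc{\mc M}{}_1^{\rm vortex}(d)$, with the standard Morse compactness for broken gradient trajectories on $B_n K$ and $V\qu K$. The codimension estimates above then imply that the only possible boundary points of the resulting 1-manifold are once-broken trajectories over a smooth affine vortex, which is precisely assertion (3). A minor technical point, handled by taking $n$ sufficiently large depending on $|\wh x_0|$ and $c_1(d)$, is that the finite-dimensional approximation $B_n K \hookrightarrow BK$ is used consistently both as the target of $\wh\rho_0$ and as the carrier of the Morse data $(\wh f_0, \wh g_0)$.
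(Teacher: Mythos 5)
Your proposal is correct and follows essentially the same route as the paper: an inductive perturbation of the classifying map over the finite stratification of $\wc{\mc M}{}_1^{\rm vortex}(d)$, using the codimension-$\geq 2$ bound on lower strata to force emptiness of the cut-down moduli there, achieving transversality on the top stratum, and invoking compactness together with Morse-trajectory gluing to obtain the boundary structure in the one-dimensional case. The only cosmetic difference is that the paper explicitly attributes assertion (3) to the gluing construction for broken Morse trajectories, which your write-up leaves implicit in the phrase ``standard Morse compactness.''
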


\begin{proof}
Fix $d\in \pi_2^K(V)$. We perturb the classifying map $\wh \rho_0$ on $\wc{\mc M}{}_1^{\rm vortex}(d)$ inductively on its strata. Let $\partial^\gamma \wc{\mc M}{}_1^{\rm vortex}(d)$ be a lowest stratum. It is a smooth manifold itself. If it is not the top stratum, then for dimensional reason, for any $\wh x_0$ and $x_\infty$ satisfying $2c_1(d) - |\wh x_0| + |x_\infty| \leq 1$, one can perturb the classifying map to a smooth map so that $\partial^\gamma \wc{\mc M}{}_1^{\rm vortex}(d) \cap \wh \rho_0^{-1}(W^u(\wh x_0)) \cap \ev_\infty^{-1}(W^s(x_\infty)) = \emptyset$. If $\gamma$ is the top stratum of $\wc{\mc M}{}_1^{\rm vortex}(d)$, then the classifying map can be chosen to be a smooth map satisfying the transversality condition. 

Now suppose we have constructed a classifying map $\wh \rho_0$ on a closed union of strata $\partial^\gamma \wc{\mc M}{}_1^{\rm vortex}(d)$ 
such that $\partial^\gamma \wc{\mc M}{}_1^d(d) \cap \wh \rho_0^{-1}(W^s (\wh x_0)) \cap \ev_\infty^{-1}( W^s(x_\infty)) = \emptyset$. Suppose these strata contain all boundary of a stratum $\partial^\alpha \wc{\mc M}{}_1^{\rm vortex}(d)$ over which we would like to extend the classifying map. Start from an arbitrary continuous extension of $\wh \rho_0$ to this stratum. As $\partial^\alpha \wc{\mc M}{}_1^{\rm vortex}(d)$ is itself a smooth manifold, a small smooth perturbation can be made so that $\partial^\alpha \wc{\mc M}{}_1^{\rm vortex}(d) \cap \wh \rho_0^{-1}( W^u(\wh x_0)) \cap \ev_\infty^{-1}( W^s(x_\infty))$ is transverse. Moreover, this intersection is nonempty if and only if $\alpha$ is a top stratum. Lastly, the statement about the index one case follows from the basic gluing construction of Morse trajectories. 
\end{proof}

Choosing orientations on the unstable manifolds $W^u(\wh x_0)$ and $W^u(x_\infty)$. The natural almost complex orientation on the vortex moduli induces a count 
\beqn
m_{n, d}(\wh x_0, x_\infty) = \left\{ \begin{array}{cc} \# \wc {\mc M}{}_1^{\rm vortex}(d; \wh x_0, x_\infty) \in {\mb Z},\ &\ 2c_1(d) - |\wh x_0| + |x_\infty| = 0,\\
0,\ &\ {\rm otherwise} \end{array} \right.
\eeqn

It follows that one can define chain maps
\beqn
\kappa_{n,d}: CM^*(\wh f_0) \to CM^{* - 2c_1(d)} (f_\infty)
\eeqn
(for any coefficient ring). It is easy to show that up to homotopy $\kappa_{n, d}$ is independent of the choice of the almost complex structure and the classifying map, hence induces a well-defined map
\beqn
\kappa_{n,d}: H^*(B_n K) \to H^{* - 2c_1(d)} (V \qu K).
\eeqn

\begin{lemma}
The maps $\kappa_{n, d}$ induces a map 
\beqn
\kappa_d: H^*(BK) \to H^{*- 2c_1(d)} (V \qu K)
\eeqn
\end{lemma}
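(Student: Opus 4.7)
The plan is to show that the family $\{\kappa_{n,d}\}_n$ is compatible with the restriction maps $\iota_n^{*}: H^{*}(B_{n+1}K) \to H^{*}(B_nK)$ induced by the filtration inclusions $\iota_n: B_nK \hookrightarrow B_{n+1}K$, in the sense that $\kappa_{n,d} \circ \iota_n^{*} = \kappa_{n+1, d}$. Since $H^*(BK) \cong \varprojlim_n H^*(B_nK)$ and, in any fixed cohomological degree $k$, the restriction $H^k(B_{n+1}K) \to H^k(B_nK)$ is an isomorphism for $n$ sufficiently large, such compatibility immediately yields a well-defined map $\kappa_d$ by setting $\kappa_d := \kappa_{n,d}$ on $H^k$ for any such $n$.

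To prove compatibility, first observe that given a classifying map $\wh\rho_0^{(n)}: \wc{\mc M}{}_1^{\rm vortex}(d) \to B_nK$ of the Poincar\'e bundle $\wc{\mc P}_1$ (which exists for $n$ large relative to $d$ by the Poincar\'e-bundle lemma), the composition $\wh\rho_0^{(n+1)} := \iota_n \circ \wh\rho_0^{(n)}: \wc{\mc M}{}_1^{\rm vortex}(d) \to B_{n+1}K$ is again a classifying map for $\wc{\mc P}_1$. By the homotopy-invariance of $\kappa_{n+1, d}$ with respect to the choice of classifying map---recorded in the excerpt---this compatible choice is admissible for computing $\kappa_{n+1,d}$.

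Next, construct Morse--Smale pairs $(\wh f_0^{(n+1)}, \wh g_0^{(n+1)})$ on $B_{n+1}K$ and $(\wh f_0^{(n)}, \wh g_0^{(n)})$ on $B_nK$ such that $B_nK$ is a union of unstable manifolds of $\wh f_0^{(n+1)}$ and $(\wh f_0^{(n+1)}, \wh g_0^{(n+1)})|_{B_nK} = (\wh f_0^{(n)}, \wh g_0^{(n)})$; this can be arranged by starting from $(\wh f_0^{(n)}, \wh g_0^{(n)})$ and extending via a fiberwise quadratic minimum on a tubular neighborhood of $B_nK$ in $B_{n+1}K$, then perturbing away from $B_nK$ to achieve full Smale transversality. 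Under this choice, every critical point $\wh x_0 \in \mathrm{Crit}(\wh f_0^{(n)})$ satisfies $W^u_{B_{n+1}K}(\wh x_0) = W^u_{B_nK}(\wh x_0) \subset B_nK$, so
\beqn
(\wh\rho_0^{(n+1)})^{-1}\bigl( W_{B_{n+1}K}^u(\wh x_0) \bigr) = (\wh\rho_0^{(n)})^{-1} \bigl( W_{B_nK}^u(\wh x_0) \bigr).
\eeqn
Consequently the moduli spaces $\wc{\mc M}{}_1^{\rm vortex}(d; \wh x_0, x_\infty)$ defined at levels $n$ and $n+1$ coincide, and so $m_{n+1, d}(\wh x_0, x_\infty) = m_{n, d}(\wh x_0, x_\infty)$ for all $\wh x_0 \in \mathrm{Crit}(\wh f_0^{(n)})$. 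Because $B_nK$ is a Morse subcomplex of $B_{n+1}K$, the projection $CM^*(\wh f_0^{(n+1)}) \to CM^*(\wh f_0^{(n)})$ onto generators lying in $B_nK$ is a chain map realizing $\iota_n^*$ on cohomology, and the chain-level equality of the counts then translates into $\kappa_{n,d} \circ \iota_n^* = \kappa_{n+1, d}$ on cohomology.

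The main technical obstacle is the construction of compatible Morse--Smale data on $B_{n+1}K$ making $B_nK$ flow-invariant, together with simultaneous Smale transversality between the chosen unstable manifolds of $\wh f_0^{(n+1)}$ and the pullback under $\ev_\infty$ of the stable manifolds of $f_\infty$ on $V \qu K$. A standard parametric transversality argument---perturbing first among Morse--Smale extensions adapted to the filtration, and then among domain-dependent almost complex structures $\wh J$ as in Lemma \ref{lemma29}---should suffice. Should this setup prove cumbersome, one may bypass Morse theory altogether: any $\beta \in H^*(BK)$ has a pseudocycle pullback $\wh\rho_0^{*}\beta$ on the top stratum of $\wc{\mc M}{}_1^{\rm vortex}(d)$ that is independent of the finite-dimensional approximation (since any two classifying maps are homotopic through classifying maps), and the codimension-$2$ bound on boundary strata from the monotonicity hypothesis permits a direct pushforward along $\ev_\infty$, yielding $\kappa_d(\beta) \in H^{*-2c_1(d)}(V \qu K)$ without a chain-level comparison.
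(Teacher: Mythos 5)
Your overall strategy is the same as the paper's (which only sketches it in one sentence): realize the restriction $\iota_n^{*}:H^{*}(B_{n+1}K)\to H^{*}(B_nK)$ Morse-theoretically by extending the Morse--Smale pair from $B_nK$ to $B_{n+1}K$, identify the counts, and pass to the inverse limit using degreewise stabilization. However, your specific implementation has a real error: you extend by a fiberwise quadratic \emph{minimum}, so that $B_nK$ becomes a union of \emph{unstable} manifolds. With the paper's conventions (negative gradient flow, inputs constrained by $\wh\rho_0^{-1}(W^u(\wh x_0))$, cohomological degree $|\wh x_0|=\operatorname{codim}W^u(\wh x_0)$), this is the wrong choice. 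Under it, a critical point $\wh x_0\in B_nK$ keeps its Morse index but its cohomological degree jumps by $c=\operatorname{codim}(B_nK\subset B_{n+1}K)$ when viewed in $CM^{*}(\wh f_0^{(n+1)})$, so your ``projection onto generators in $B_nK$'' is not degree-preserving and realizes (a shift of) the Gysin pushforward $\iota_{n!}$ rather than $\iota_n^{*}$ --- e.g.\ for $\mathrm{pt}\hookrightarrow S^2$ a fiberwise minimum at the point represents the degree-two generator $H$, and the projection would send $H\mapsto 1$ instead of $H\mapsto 0$. Relatedly, the composed classifying map $\iota_n\circ\wh\rho_0^{(n)}$ has image in $B_nK$ and therefore can never be transverse in $B_{n+1}K$ to $W^u_{B_{n+1}K}(\wh x_0)\subset B_nK$; your set-theoretic identity of moduli spaces then equates a transverse zero-dimensional count at level $n$ with an excess-dimensional problem at level $n+1$ (the two virtual dimensions differ by $c$), so the asserted equality $m_{n+1,d}=m_{n,d}$ compares counts attached to different degrees.

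The fix is to flip the sign: extend by a fiberwise quadratic \emph{maximum}, so that $B_nK$ is a union of \emph{stable} manifolds. Then for $\wh x_0\in\mathrm{Crit}(\wh f_0^{(n)})$ the unstable manifold $W^u_{B_{n+1}K}(\wh x_0)$ is an open thickening of $W^u_{B_nK}(\wh x_0)$ containing all normal directions, its codimension in $B_{n+1}K$ equals $|\wh x_0|_n$ (degrees match), transversality of $\iota_n\circ\wh\rho_0^{(n)}$ to it is inherited from level $n$, and $(\iota_n\circ\wh\rho_0^{(n)})^{-1}(W^u_{B_{n+1}K}(\wh x_0))=(\wh\rho_0^{(n)})^{-1}(W^u_{B_nK}(\wh x_0))$ genuinely identifies the zero-dimensional moduli spaces. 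One must then still account for generators of $CM^{*}(\wh f_0^{(n+1)})$ lying outside $B_nK$ (the chain map realizing $\iota_n^{*}$ is a hybrid-trajectory count, not literally a projection), but homotopy-independence of $\kappa_{n+1,d}$ lets you handle this at the level of cohomology. Your fallback pseudocycle argument --- using that any two classifying maps are homotopic, so $\wh\rho_0^{*}\beta$ is independent of the finite-dimensional approximation, together with the codimension-two boundary estimate --- is sound and arguably the cleanest route; it sidesteps the bookkeeping above entirely.
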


\begin{proof}
The map $H^*(B_{n+1} K) \to H^*(B_n K)$ can be constructed Morse-theoretically by extending any Morse--Smale pair on $B_n K$ to $B_{n+1} K$.
\end{proof}

The {\bf quantum Kirwan map} is defined to be the map 
\beqn
\kappa: H^*(BK) \to H^*(V \qu K) \otimes \Lambda,\ \kappa(a) = \sum_{d \in \pi_2^K(X)} q^d \kappa_d(a). 
\eeqn

\subsection{Quantum Kirwan map conjecture: the monotone case}

We prove Theorem \ref{thma}. Recall that one can use the Morse model to define the structure coefficients of the quantum multiplication. In the monotone case, one can use a fixed almost complex structure and consider moduli space of degree $d$ spheres with three marked points attached to gradient rays for three generic Morse--Smale pairs. For the symplectic reduction $V \qu K$, upon choosing the three Morse--Smale pairs, $(f_1, g_1)$, $(f_2, g_2)$ for the two incoming edges and $(f_\infty, g_\infty)$ for the outgoing edge, denote this moduli space by 
\beqn
{\mc M}{}_3^{\rm downstairs}(d; x_1, x_2, x_\infty)
\eeqn
and its compactification by $\ov{\mc M}{}_3^{\rm downstairs}(d; x_1, x_2, x_\infty)$. Define the count by 
\beqn
n_d(x_0, x_1, x_\infty) \in {\mb Z}.
\eeqn
which is zero if the index of the moduli, ${\rm dim} V\qu K - |x_0| - |x_1| + |x_\infty|$, is nonzero. On the homology level these counts induce a map
\beqn
\star_d: H^*(V \qu K; {\mb Z}) \otimes H^*( V \qu K; {\mb Z}) \to H^*(M; {\mb Z}).
\eeqn
Summing over $d$ one obtains the quantum cup product
\beqn
\star:= \sum_d q^d\  \star_d: H^*(V \qu K; \Lambda_{\mb Z}) \otimes H^*( V \qu K; \Lambda_{\mb Z}) \to H^*(V \qu K; \Lambda_{\mb Z}).
\eeqn
In particular, the $d = 0$ part is the classical cup product. Similarly, upon choosing Morse--Smale pairs $(\wh f_i, \wh g_i)$, $i = 1, 2, \infty$ on the classifying space $B_n K$ one has the moduli space (with constant spheres)
\beqn
{\mc M}^{\rm upstairs}_3(\wh x_0, \wh x_2, \wh x_\infty)
\eeqn
for critical points $\wh x_i \in {\rm crit} \wh f_i$.  

Consider scaled curves with two marked points. The moduli space $\ov{\mc M}{}_2^{\rm scaled}$ is homeomorphic to $S^2$ parametrized by the relative position of the two marked points $z_0, z_1 \in {\mb C}$. When $z_0 = z_1$ the configuration is a marked sphere with the two marked points attached to ${\mb C}$; when $z_0 - z_1 = \infty$ the configuration is a marked sphere with two nodes connecting with two copies of ${\mb C}$ with the two marked points $z_0$ and $z_1$ respectively (see Figure \ref{fig:twopoints}).

\begin{figure}[h]
    \centering
    \includegraphics[scale=0.8]{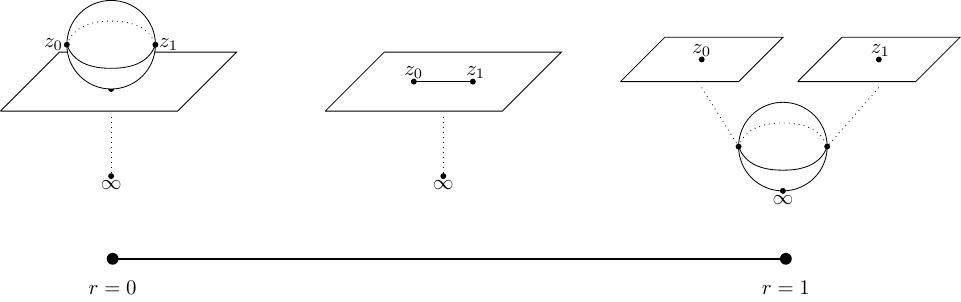}
    \caption{Degenerations of scaled curves with two marked points.}
    \label{fig:twopoints}
\end{figure}

To obtain a 1-parameter family we require that ${\rm Im} z_0 = {\rm Im} z_1$ and ${\rm Re} z_0 \leq {\rm Re} z_1$. Subject to this constraint, the moduli space is denoted by 
\beqn
\ov{\mc M}{}_{2, {\mb R}}^{\rm scaled} \cong [0, 1]
\eeqn
with universal curve $\ov{\mc C}{}_{2, {\mb R}}^{\rm scaled}$. For $r \in [0, 1]$, let $\Sigma_{(r)}$ be the corresponding domain curve. In particular, $\Sigma_{(0)}$ is a complex plane $\Sigma_{(0), \infty}$ union with a sphere $\Sigma_{(0), 01}$ containing the two marked points, and $\Sigma_{(1)}$ is a sphere $\Sigma_{(1),\infty}$ union with two complex planes $\Sigma_{(1), 0}$ and $\Sigma_{(1), 1}$ containing $z_0$ and $z_1$ separately. 

Now we choose the perturbation data, i.e., the family o domain-dependent almost complex structures and various Morse--Smale pairs. We first fix generic Morse--Smale pairs $(\wh f_0, \wh g_0)$, $(\wh f_1, \wh g_1)$, $(\wh f_\infty, \wh g_\infty)$ on $B_n K$ and generic Morse--Smale pairs $(f_0, g_0)$, $(f_1, g_1)$, $(f_\infty, g_\infty)$ on $V \qu K$. We then consider a family $\wh J_{(r)}$ of domain-dependent almost complex structures parametrized by $z \in \Sigma_{(r)}$. We require $\wh J_{(0)}$ and $\wh J_{(1)}$ to take specific values. More precisely, $\wh J_{(0)}$, which induces a domain-dependent almost complex structure $\wh J_{(0), \infty}$ on the plane $\Sigma_{(0), \infty}$, can be used to construct the quantum Kirwan map for the pair $(\wh f_\infty, \wh g_\infty)$ and $(f_\infty, g_\infty)$; $\wh J_{(1)}$, which induces a domain-dependent almost complex structure $\wh J_{(1), 0}$ on $\Sigma_{(1), 0}$ resp. $\wh J_{(1), 1}$ on $\Sigma_{(1), 1}$, can be used to define the quantum Kirwan map for the pair $(\wh f_0, \wh g_0)$ and $(f_0, g_0)$ resp. $(\wh f_1, \wh g_1)$ and $(f_1, g_1)$; moreover, $\wh J_{(1)}$ also induces a domain-dependent almost complex structure on $V\qu K$ parametrized by $z \in \Sigma_{(1), \infty} \cong S^2$, which is required to be a constant so that the quantum cup product can be defined for the three Morse--Smale pairs downstairs. 

For any such family of almost complex structures $\wh J_{(r)}$ and a degree $d$, one obtains a moduli space of stable affine vortices and its cusp compactification, denoted by 
\beqn
\wc{\mc M}{}_{2, [0, 1]}^{\rm vortex}(d).
\eeqn
One can perturb $\wh J_{(r)}$ if necessary (but fix $\wh J_{(0)}$ and $\wh J_{(1)}$) so that every strata is a smooth manifold of the correct dimension. Let 
\beqn
\wc{\mc M}{}_{2, (r)}^{\rm vortex}(d)
\eeqn
be the slice for a specific $r \in [0, 1]$. There are two Poincar\'e bundles
\beqn
\wc{\mc P}_0, \wc{\mc P}_1 \to \wc{\mc M}{}_{2, [0, 1]}^{\rm vortex}(d)
\eeqn
corresponding to the two marked points $z_0$ and $z_1$. One then choose classifying maps
\beqn
\wh \rho_0, \wh \rho_1: \wc{\mc M}{}_{2, [0, 1]}^{\rm vortex}(d) \to B_n K
\eeqn
so that the moduli spaces 
\beqn
\wc{\mc M}{}_{2, [0, 1]}^{\rm vortex}(d; \wh x_0, \wh x_1, x_\infty)
\eeqn
whenever the expected dimension 
\beqn
2c_1(d) - |\wh x_0| - |\wh x_1| + |x_\infty| = 1
\eeqn
satisfy conditions similar to those of Lemma \ref{lemma29}. Notice that there is a natural isomorphism
\beqn
\wc{\mc P}_0|_{\wc{\mc M}{}_{2, (0)}^{\rm vortex}(d)} \cong \wc{\mc P}_1|_{\wc{\mc M}{}_{2, (0)}^{\rm vortex}(d)}
\eeqn
hence we may require 
\beqn
\wh \rho_0|_{\wc{\mc M}{}_{2, (0)}^{\rm vortex}(d)} = \wh \rho_1|_{{\wc{\mc M}{}_{2, (0)}^{\rm vortex}(d)}}.
\eeqn

\begin{lemma}
When $2c_1(d) - |\wh x_0| - |\wh x_1| + |x_\infty| = 1$, the moduli space $\wc {\mc M}{}_{2, [0, 1]}^{\rm vortex}(d; \wh x_0, \wh x_1, x_\infty)$ is a compact 1-dimensional manifold with boundary where boundary points are either once-broken configurations with parameter $r \in (0, 1)$, or unbroken configurations with $r \in \{0, 1\}$. 
\end{lemma}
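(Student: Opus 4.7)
The strategy is to run a parametric version of Lemma \ref{lemma29} over the interval $r \in [0,1]$. The three ingredients are stratum-by-stratum transversality on $\wc{\mc M}{}_{2,[0,1]}^{\rm vortex}(d)$, compactness inherited from the cusp compactification together with standard Morse-trajectory compactness, and a dimension count that isolates the codimension-one boundary.

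First I would perturb the family $\wh J_{(r)}$ in the interior $r \in (0,1)$, leaving $\wh J_{(0)}$ and $\wh J_{(1)}$ untouched, so that every stratum of $\wc{\mc M}{}_{2,[0,1]}^{\rm vortex}(d)$ is a smooth manifold of the correct virtual dimension. Exactly as in the proof of Lemma \ref{lemma29} I then construct the classifying maps $\wh\rho_0, \wh\rho_1 : \wc{\mc M}{}_{2,[0,1]}^{\rm vortex}(d) \to B_n K$ inductively on strata, beginning with the lowest-dimensional ones. On each new stratum I choose an arbitrary continuous extension and perturb generically to make the triple incidence locus $\wh\rho_0^{-1}(W^u(\wh x_0)) \cap \wh\rho_1^{-1}(W^u(\wh x_1)) \cap \ev_\infty^{-1}(W^s(x_\infty))$ transverse. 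The constraint $\wh\rho_0|_{r=0} = \wh\rho_1|_{r=0}$ coming from the canonical identification of Poincar\'e bundles at $r=0$, and the pre-chosen values at $r=1$ that match the classifying maps used to define $\kappa_{n,d}$ separately on $\Sigma_{(1),0}$ and $\Sigma_{(1),1}$, are built into the inductive step by performing it relative to these two boundary slices.

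Second, compactness of $\wc{\mc M}{}_{2,[0,1]}^{\rm vortex}(d; \wh x_0, \wh x_1, x_\infty)$ is an immediate consequence of the compactness of $\wc{\mc M}{}_{2,[0,1]}^{\rm vortex}(d)$ asserted in Section \ref{section2} combined with the standard compactness of broken Morse trajectories in $B_n K$ and $V \qu K$. Third, to enumerate the boundary of the $1$-manifold, I appeal to the codimension bound in the last lemma of Subsection on cusp compactification, which under Hypothesis \ref{hyp4} makes every internal degeneration (affine-vortex splitting, sphere bubbling downstairs with simple-curve replacement, or passage to a scaled tree type other than the two distinguished degenerations at $r=0$ and $r=1$) codimension at least two. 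The only codimension-one phenomena that remain are (i) a single Morse trajectory breaking at one of the three asymptotic ends, which can occur for $r \in (0,1)$, and (ii) the two endpoint slices $r \in \{0,1\}$, which supply the boundary of the parameter interval itself. A standard Morse-theoretic gluing argument for (i) and a neighborhood-of-boundary gluing for (ii) realize these as honest boundary points of a topological $1$-manifold.

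The main technical obstacle is arranging the inductive transversality subject to the rigid endpoint constraints on $\wh\rho_0, \wh\rho_1, \wh J_{(r)}$ at $r \in \{0,1\}$. Unlike in Lemma \ref{lemma29}, one cannot freely perturb across these slices since the classifying maps there are already fixed by the constructions used to define $\kappa_{n,d}$ and the quantum product on $V \qu K$. This is handled by running the stratum induction relative to $\{r=0\} \cup \{r=1\}$: since the pre-chosen data at the endpoints are themselves generic by the earlier constructions, the compatible extension into the interior strata meets no obstruction, and the resulting moduli space has precisely the boundary structure claimed.
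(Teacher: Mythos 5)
Your overall strategy (parametric transversality over $r\in[0,1]$, compactness from the cusp compactification plus Morse-trajectory compactness, and a codimension count isolating the boundary) matches the paper's setup; the stratumwise construction of $\wh\rho_0,\wh\rho_1$ relative to the fixed endpoint data is carried out in the surrounding text rather than in the lemma's proof, and your version of it is fine. The gap is in how you handle the two endpoint slices. You treat $r=0$ and $r=1$ symmetrically, disposing of both with ``a neighborhood-of-boundary gluing,'' but they are analytically very different, and the paper's proof consists precisely of distinguishing them. At $r=0$ the limiting configuration has a sphere bubble mapping to $V$, which is constant because $V$ is contractible (Hypothesis \ref{hyp3} forces zero energy); so nothing actually degenerates, the separation $z_1-z_0$ is simply a smooth parameter of the vortex equation, and the $r=0$ slice is a boundary of the $1$-manifold for trivial reasons --- no gluing is needed there. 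At $r=1$ the configuration genuinely splits into two affine vortices attached at their points at infinity to a holomorphic sphere in $V\qu K$, and showing that each such split configuration is an honest boundary point requires the adiabatic-limit gluing theorem for affine vortices (the paper cites \cite{Xu_glue} for exactly this). This is not a standard Morse-theoretic or Gromov-type gluing: the node sits at the infinity of the noncompact domain ${\mb C}$, where the vortex only ``closes up'' via Ziltener's decay estimates, and constructing the family of honest vortices for $r$ slightly less than $1$ is the hardest analytic input in the whole argument. Calling it standard leaves the central step of the lemma unjustified; you should either invoke that gluing theorem explicitly or acknowledge that it is a nontrivial external input rather than routine.
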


\begin{proof}
Once-broken configurations are boundary points as one can glue broken Morse trajectories. Points at $r = 0$ slice are boundary points simply because the distance $z_1- z_0$ is a parameter of the vortex equation. The claim that points at $r = 1$ slice are boundary points follows from the gluing result of \cite{Xu_glue}. 
\end{proof}

To identify the contributions of boundary points, one needs to allow the internal edges to acquire length. On the $r = 0$ side of the curve moduli $\ov{\mc M}{}_{2, {\mb R}}^{\rm scaled}$, we allow the edge connecting the sphere and the plane to have positive length up to infinity. Let $r\in [-1, 0]$ parametrize the length of this edge, denoted by $T_{(r)}$, where $r = -1$ corresponds to when the length is infinity (hence breaks). Similarly, on the $r = 1$ side of $\ov{\mc M}{}_{2, {\mb R}}^{\rm scaled}$, we allow the two edges $T_{(r), 0}$ and $T_{(r), 1}$ connecting the planes $\Sigma_{(1), 0}$ and $\Sigma_{(1), 1}$ and the sphere $\Sigma_{(1), \infty}$ to acquire an equal length up to infinity. Let $r \in [1, 2]$ parametrizes this length where $r = 2$ corresponds to when the length is infinity (hence the two edges breaks at the same time). 

We extend $\wc {\mc M}{}_{2, [0,1]}^{\rm vortex}(d; \wh x_0, \wh x_1, x_\infty)$ to $r \in [-1, 0] \cup [1, 2]$ When $r \in [-1, 0]$, on the edge $T_{(r)}$ we include a gradient segment for the pair $(\wh f_\infty, \wh g_\infty)$ in $B_n K$. When $r \in [1, 2]$, on the edge $T_{(r), 0}$ resp. $T_{(r), 1}$, we include a gradient segment for the pair $(f_0, g_0)$ resp. $(f_1, g_1)$. Moreover, extend the almost complex structure, other Morse--Smale pairs, and the classifying maps to the two segments $[-1, 0]$ and $[1, 2]$ in the $r$-independent way. One then obtains extended moduli spaces 
\beqn
\wc {\mc M}{}_{2, [-1, 2]}^{\rm vortex}(d; \wh x_0, \wh x_1, x_\infty).
\eeqn
One may need, if necessary to perturb the Morse functions on the internal edges to achieve transversality for $r \in [-1 + \epsilon, -\epsilon] \cup [1 + \epsilon, 2- \epsilon]$.

It is easy to see that when the virtual dimension is $1$, the extended moduli space is still a compact 1-dimensional manifold with boundary, where boundary points are either configurations with broken external edges for $r \neq -1, 0, 1, 2$, or configurations with internal broken edges for $r = -1$ or $r = 2$. The identification of boundary points is given below. 

\begin{lemma}
When
\beqn
2 c_1(d) - |\wh x_0| - |\wh x_1| + |x_\infty| = 1
\eeqn
the moduli space $\wc {\mc M}{}_{2, [-1, 2]}^{\rm vortex}(d; \wh x_0, \wh x_1, x_\infty)$ is a compact one-dimensional (topological) manifolds with boundary, where boundary points form the disjoint union of 
\beqn
\bigsqcup_{\wh x_\infty \in {\rm crit} \wh f_\infty} {\mc M}_3^{\rm upstairs} (\wh x_0, \wh x_1, \wh x_\infty) \times {\mc M}_1^{\rm vortex}(d; \wh x_\infty, x_\infty),
\eeqn
\beqn
\bigsqcup_{d_0 + d_1 + d_\infty = d} \bigsqcup_{x_0 \in {\rm crit} f_0} \bigsqcup_{x_1 \in {\rm crit} f_1} {\mc M}_1^{\rm vortex}(d_0; \wh x_0, x_0) \times {\mc M}_1^{\rm vortex}(d_1; \wh x_1, x_1) \times {\mc M}_3^{\rm downstairs} (d_\infty; x_0, x_1, x_\infty),
\eeqn
and configurations for $r \in (-1, 0) \cup (0, 1) \cup (1, 2)$ with one external breaking.
\end{lemma}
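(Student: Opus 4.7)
The plan is to combine a standard Gromov-type compactness argument for parametrized vortex moduli with a codimension count showing that only the three asserted strata carry $1$-dimensional boundary. The key point is that the interval $[-1,2]$ of domain parameters has been rigged precisely so that the only codimension-one ``domain-type'' boundary of $\wc{\mc M}{}_{2,[-1,2]}^{\rm vortex}$ occurs at $r = -1$ (where the single internal edge $T_{(r)}$ breaks) and at $r = 2$ (where both internal edges $T_{(r),0}$, $T_{(r),1}$ break simultaneously).

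First I would run compactness: any sequence $(u_n, \mathbf{y}_n)$ of configurations in $\wc{\mc M}{}_{2,[-1,2]}^{\rm vortex}(d;\wh x_0,\wh x_1,x_\infty)$ subconverges after reparametrization to a stable limit consisting of an affine vortex of the appropriate type on the deformed domain, together with (possibly broken) external Morse trajectories on the upstairs and downstairs Morse--Smale pairs. Hypothesis~\ref{hyp3} prevents sphere bubbling in $V$, so the only non-trivial degenerations are (i) breaking of an external gradient trajectory, (ii) degeneration of the scaled curve parameter $r$ to an endpoint $\{-1,0,1,2\}$, and (iii) vortex-type bubbling (splitting off affine vortices at $\infty$, or bubbling off $J$-holomorphic spheres in $V\qu K$). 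I would then use the dimension formula ${\rm dim}^{\rm vir}=2c_1(d)-|\wh x_0|-|\wh x_1|+|x_\infty|=1$ together with the codimension-$\ge 2$ estimate from the lemma on $\wc{\mc M}{}_k^{\rm vortex}(d)$ to exclude (iii) and to exclude any breaking that does not correspond to either a single external Morse breaking or one of the two distinguished internal-edge breakings at $r\in\{-1,2\}$.

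Next I would identify the boundary fibers. At $r=-1$ the sphere $\Sigma_{(0),01}$ carrying the two marked points separates from the plane $\Sigma_{(0),\infty}$ along the edge $T_{(r)}$, whose length has become infinite; on this infinite edge the prescribed gradient segment for $(\wh f_\infty,\wh g_\infty)$ resolves into an honest broken upstairs trajectory through some critical point $\wh x_\infty$. By construction of the perturbation data at $r=0$, the sphere side is an element of ${\mc M}_3^{\rm upstairs}(\wh x_0,\wh x_1,\wh x_\infty)$ (with constant vortex upstairs, using the classifying-map equality $\wh\rho_0=\wh\rho_1$ at $r=0$), while the plane side is an element of $\wc{\mc M}{}_1^{\rm vortex}(d;\wh x_\infty,x_\infty)$. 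At $r=2$ both edges $T_{(r),0},T_{(r),1}$ break simultaneously (this simultaneity is forced by the choice to make them parametrized by a single parameter), producing two affine vortices of degrees $d_0,d_1$ whose evaluations at infinity are matched through fresh critical points $x_0,x_1\in V\qu K$ to a three-pointed downstairs configuration of degree $d_\infty$ with $d_0+d_1+d_\infty=d$; here the fact that the almost complex structure $\wh J_{(1),\infty}$ is constant on $\Sigma_{(1),\infty}$ is exactly what allows the last factor to be identified with ${\mc M}_3^{\rm downstairs}(d_\infty;x_0,x_1,x_\infty)$.

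Finally, one needs the converse, namely that every such product configuration actually arises as the boundary limit of a family in $\wc{\mc M}{}_{2,[-1,2]}^{\rm vortex}(d;\wh x_0,\wh x_1,x_\infty)$ and that the local model near it is a half-line. External Morse breakings for $r\in(-1,0)\cup(0,1)\cup(1,2)$ follow from the standard Morse-trajectory gluing. The breakings at $r=-1$ and $r=2$ follow from the vortex gluing theorem of \cite{Xu_glue} applied in parametrized form (with the internal edge length serving as the gluing parameter), combined with a straightforward fiber-product transversality argument that uses Lemma~\ref{lemma29} to arrange that the upstairs/downstairs factors and the vortex factors are cut out transversely. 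The main obstacle I expect is the $r=2$ boundary: one must verify that the simultaneous gluing of the two internal edges, together with the matching conditions $\ev_\infty = x_0$ and $\ev_\infty = x_1$ on the two planes, produces a one-sided neighborhood in the parametrized moduli space of the expected dimension; the delicate point is that the two gluing parameters are constrained to coincide, and one needs the combined linearized operator to remain surjective under this constraint, which I would handle by a relative version of the gluing argument in \cite{Xu_glue} applied to the parametrized perturbation data.
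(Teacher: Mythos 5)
Your proposal is correct and follows essentially the same route the paper takes (which it leaves largely implicit): Gromov compactness plus the codimension-$\geq 2$ estimate to rule out unwanted degenerations, identification of the $r=-1$ and $r=2$ strata as the two fibre products, standard Morse gluing for external breakings, and the gluing theorem of \cite{Xu_glue} for the internal-edge breakings. Your added care about the simultaneous gluing of the two edges at $r=2$ and the role of $\wh\rho_0 = \wh\rho_1$ on the $r=0$ slice is a sensible elaboration of points the paper glosses over, not a different argument.
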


The following corollary follows easily. 

\begin{cor}
The two chain maps
\beqn
(\wh x_0, \wh x_1) \mapsto \kappa_d( \wh x_0 \cup \wh x_1) 
\eeqn
and 
\beqn
(\wh x_0, \wh x_1) \mapsto \sum_{d_0 + d_1 + d_\infty = d} \kappa_{d_0}(\wh x_0) \star_{d_\infty} \kappa_{d_1}(\wh x_1)
\eeqn
are homotopic. Therefore, for any $a, b \in H^*(BK)$ one has
\beqn
\kappa(a \cup b) = \kappa(a) \star \kappa(b).
\eeqn
\end{cor}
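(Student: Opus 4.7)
The plan is to extract a chain homotopy identity from the preceding boundary analysis and then pass to cohomology. First I would define a chain homotopy
\[
H_d : CM^*(\wh f_0) \otimes CM^*(\wh f_1) \to CM^{*-2c_1(d)-1}(f_\infty)
\]
by setting $\langle H_d(\wh x_0 \otimes \wh x_1), x_\infty \rangle$ equal to the signed count of the zero-dimensional moduli $\wc{\mc M}{}_{2,[-1,2]}^{\rm vortex}(d; \wh x_0, \wh x_1, x_\infty)$, which is rigid precisely when $2c_1(d) - |\wh x_0| - |\wh x_1| + |x_\infty| = -1$. Such rigid configurations are unbroken vortex configurations occurring at isolated interior values $r \in (-1,2)$, and their count is finite by the same compactness and transversality package that underlies the preceding lemma.

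Next I would invoke the preceding lemma in the index-one case and read off the three types of boundary contributions. On the one hand, $B_n K$ carries no nontrivial holomorphic sphere classes (the Morse model on a finite-dimensional smooth classifying space has only constant spheres), so the upstairs moduli ${\mc M}_3^{\rm upstairs}(\wh x_0, \wh x_1, \wh x_\infty)$ simply realizes the classical cup product on $CM^*(\wh f_\infty; {\mb Z})$. Therefore the $r = -1$ boundary contribution equals the matrix coefficient of $\kappa_d(\wh x_0 \cup \wh x_1)$, while the $r = 2$ boundary contribution, after summing over $d_0+d_1+d_\infty = d$ and over intermediate downstairs critical points $x_0, x_1$, equals the matrix coefficient of $\sum \kappa_{d_0}(\wh x_0) \star_{d_\infty} \kappa_{d_1}(\wh x_1)$. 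The external breaking contributions at $r \in (-1,2)\setminus\{0,1\}$ organize into the Morse chain expression $(\partial_{CM} H_d + H_d \partial_{CM})(\wh x_0 \otimes \wh x_1)$ in the standard manner.

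Setting the total signed boundary count to zero, summing over $d$ with Novikov weights $q^d$, and writing $H = \sum_d q^d H_d$, I obtain the chain-level identity
\[
\kappa \circ \cup \;-\; \star \circ (\kappa \otimes \kappa) \;=\; \partial H + H \partial,
\]
which descends on cohomology to $\kappa(a \cup b) = \kappa(a) \star \kappa(b)$ for $a, b \in H^*(B_n K)$. A final direct-limit argument across the Morse filtration of $BK$ by the $B_n K$ promotes the identity to $H^*(BK)$; compatibility under $H^*(B_{n+1}K) \to H^*(B_n K)$ is built into the definition of $\kappa$ by the earlier lemma.

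The main technical obstacle will be a consistent sign analysis: the complex orientation on the affine vortex moduli, the chosen orientations of the unstable manifolds upstairs and downstairs, and the orientation of the parameter interval $[-1, 2]$ must combine so that the $r=-1$ and $r=2$ endpoint terms appear with opposite signs from the external breaking terms. This is standard but tedious, handled separately for the single internal edge that degenerates at $r = -1$ and for the two internal edges that degenerate simultaneously at $r = 2$, and it relies on the gluing results already invoked in the preceding lemma to identify the local structure of the 1-manifold near its endpoints.
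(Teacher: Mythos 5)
Your argument is correct and is exactly the route the paper intends: the preceding lemma's boundary description of the one-dimensional parametrized moduli over $r\in[-1,2]$ is turned into a chain homotopy by counting the rigid interior configurations, with the $r=-1$ end giving $\kappa_d\circ\cup$, the $r=2$ end giving the quantum-product composite, the external breakings giving $\partial H_d + H_d\partial$, and the inverse limit over $B_nK$ finishing the statement on $H^*(BK)$. Your degree bookkeeping for $H_d$ (rigid when $2c_1(d)-|\wh x_0|-|\wh x_1|+|x_\infty|=-1$) is the internally consistent one for a chain homotopy between maps of degree $-2c_1(d)$, so no gap there.
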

This finishes the proof of Theorem \ref{thma}.

\section{Quantum Steenrod operation}\label{section3}

We review the geometric constructions of the classical and quantum Steenrod operations given by Wilkins \cite{wilkins2020construction} and Seidel--Wilkins \cite{Seidel_Wilkins_2022}.

\subsection{${\mb Z}_p$-equivariant (co)homology}

We give an explicit expression of the classifying space and the universal bundle for the finite group ${\mb Z}_p$. Denote 
\beqn
S^\infty:= \Big\{ w = (w_0, w_1, w_2, \ldots)\ |\ w_i \neq 0\ {\rm for\ only\ finitely\ many\ }i,\ \sum_{i=1}^\infty |w_i|^2 = 1.\Big\}
\eeqn
It is the limit of the sequence of spheres
\beqn
S^{2k-1} = \Big\{ w = (w_0, w_1, w_2, \ldots) \in S^\infty\ |\ w_i = 0 \forall i \geq k \Big\}.
\eeqn
Then $U(1)$ acts on $S^\infty$ preserving each level $S^{2k-1}$. Regarding ${\mb Z}_p\subset U(1)$ as the subgroup of $p$-th roots of unity with the generator $\tau = e^{\frac{2\pi {\bf i}}{p}}$, then 
\beqn
{\mb Z}_p = \{1, \tau, \tau^2, \ldots, \tau^{p-1}\}.
\eeqn
Then we can regard the free quotient 
\beqn
B{\mb Z}_p:= S^\infty/ {\mb Z}_p
\eeqn
as a classifying space of ${\mb Z}_p$ with $E{\mb Z}_p = S^\infty$ the universal bundle. In ${\mb F}_p$ coefficients, the equivariant cohomology of a point is 
\beqn
H_{{\mb Z}_p}^*({\rm pt}; {\mb F}_p) = H^*(B{\mb Z}_p; {\mb F}_p) = \left\{ \begin{array}{ll} {\mb F}_p[t, \theta]/\langle \theta^2 - t \rangle,\ &\ p = 2,\\
                                  {\mb F}_p[t, \theta]/ \langle \theta^2 \rangle,\ &\ p > 2.
                                  \end{array}\right. 
\eeqn
Here $t$ has degree $2$, which can be viewed as the pullback of the universal first Chern class of $BU(1)$ and $\theta$ has degree $1$.

We also specify certain cycles in $B{\mb Z}_p$, following \cite[Section 2]{Seidel_Wilkins_2022}. Define 
\beqn
\Delta_{2k} = \Big\{ w \in S^\infty\ |\ w_k \geq 0,\ w_i = 0\ \forall i \geq k+1 \Big\}
\eeqn
and
\beqn
\Delta_{2k+1} = \Big\{ w \in S^\infty\ |\ e^{- {\bf i} \theta} w_k \geq 0\ {\rm for\ some\ }\theta \in [0, \frac{2\pi}{p}],\ w_i = 0\ \forall i \geq k+1 \Big\}.
\eeqn
One has 
\beq
\partial \Delta_i = \left\{ \begin{array}{ll} \Delta_{i-1} \cup \tau(\Delta_{i-1}) \cup \cdots \cup \tau^{p-1}(\Delta_{i-1}),\ &\ i\ {\rm even},\\  \Delta_{i-1} \cup \tau(\Delta_{i-1}),\ &\ i\ {\rm odd}         \end{array}\right.
\eeq

After the ${\mb Z}_p$-action, the chains $\Delta_i$ become cycles in $B{\mb Z}_p$ in ${\mb F}_p$-coefficients and their homology classes form a basis of $H_*^{{\mb Z}_p}({\rm pt}; {\mb F}_p)$. By abuse of notations, we still denote by the same symbol the corresponding cycle/homology class, i.e.
\beqn
\Delta_i \in H_i (B{\mb Z}_p; {\mb F}_p).
\eeqn

\begin{lemma}\cite[Lemma 2.1]{Seidel_Wilkins_2022}\label{lemma31}
Let 
\beqn
\delta_*: H_*(B{\mb Z}_p; {\mb F}_p) \to H_*(B{\mb Z}_p; {\mb F}_p) \otimes H_*(B{\mb Z}_p; {\mb F}_p)
\eeqn
be the map induced by the diagonal embedding of $B{\mb Z }_p$. Then 
\beqn
\delta_* \Delta_i = \left\{ \begin{array}{rl} \displaystyle \sum_j \Delta_j \otimes \Delta_{i-j},\ &\ i\ {\rm is\ odd\ or\ } p = 2,\\[0.5cm]
                         \displaystyle  \sum_{j\ {\rm even}}  \Delta_j \otimes \Delta_{i-j},\ &\ i\ {\rm is\ even\ and\ }p > 2.
                           \end{array}\right.
\eeqn
    
\end{lemma}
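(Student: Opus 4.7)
The plan is to deduce the coproduct formula by dualizing the cup-product structure on $H^*(B\mathbb{Z}_p;\mathbb{F}_p)$. First I would check that the cells $\{\tau^j\Delta_i:0\le j<p,\,i\ge 0\}$ form a $\mathbb{Z}_p$-equivariant CW decomposition of $S^\infty$, with topological boundary relations as given in the excerpt. Passing to the quotient $B\mathbb{Z}_p$ and reducing mod $p$, the cellular differential becomes $\partial\Delta_{2k}\equiv p\Delta_{2k-1}\equiv 0$ and $\partial\Delta_{2k+1}\equiv(\tau-1)\Delta_{2k}\equiv 0$, so every $\Delta_i$ is a cycle and $\{\Delta_i\}_{i\ge 0}$ is an $\mathbb{F}_p$-basis of $H_*(B\mathbb{Z}_p;\mathbb{F}_p)$.

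Second, I would identify the cohomological dual basis $\{\Delta_i^\vee\}$ in terms of the ring-theoretic generators. Using the inclusion of classifying spaces $B\mathbb{Z}_p\hookrightarrow BU(1)=\mathbb{C}P^\infty$ induced by $\mathbb{Z}_p\subset U(1)$, the class $t\in H^2(B\mathbb{Z}_p;\mathbb{F}_p)$ is the mod-$p$ reduction of the pullback of the universal first Chern class, and $\Delta_{2k}$ is, up to the circle fibre direction, the $\mathbb{Z}_p$-lift of the standard $\mathbb{C}P^k\subset\mathbb{C}P^\infty$ cycle, so it pairs to $1$ with $t^k$. Combining this with $\theta$ being the Bockstein-predecessor of $t$ (or $t^{1/2}$ when $p=2$), one obtains $\Delta_{2k}^\vee=t^k$ and $\Delta_{2k+1}^\vee=\theta t^k$ for $p>2$, and $\Delta_i^\vee=\theta^i$ for $p=2$.

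Third, I would invoke the universal duality $\langle a\otimes b,\delta_*c\rangle=\langle a\cup b,c\rangle$: the coefficient of $\Delta_j\otimes\Delta_{i-j}$ in $\delta_*\Delta_i$ is exactly $\langle\Delta_j^\vee\cup\Delta_{i-j}^\vee,\,\Delta_i\rangle$. For $p=2$ every product $\theta^j\cup\theta^{i-j}=\theta^i$ pairs to $1$ with $\Delta_i$, giving the full sum over $j$. For $p>2$ the relation $\theta^2=0$ kills the product whenever both $j$ and $i-j$ are odd; when $i$ is odd this constraint is automatic for all mixed-parity pairs, recovering the full sum, while when $i$ is even it forces both indices to be even, recovering the restricted sum. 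This matches both cases of the lemma.

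The main obstacle is step two: verifying that the specific pie-slice cycles $\Delta_i$ are dual precisely to the monomial basis $\{t^k,\theta t^k\}$ rather than to some basis differing by a non-trivial triangular change of variables. I would carry this out by induction on $i$, using the inclusions $S^{2k-1}\hookrightarrow S^{2k+1}$ together with the naturality of the Euler class under the tautological $U(1)$-fibration $E\mathbb{Z}_p\to B\mathbb{Z}_p$ and the Bockstein from the coefficient sequence $0\to\mathbb{Z}\to\mathbb{Z}\to\mathbb{F}_p\to 0$. Once this identification is nailed down, the combinatorics of step three is immediate and the claim follows.
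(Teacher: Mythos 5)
Your argument is correct, but note that the paper does not prove this lemma at all: it is imported verbatim from Seidel--Wilkins \cite[Lemma 2.1]{Seidel_Wilkins_2022}, where it is established by a chain-level computation of an explicit $\mathbb{Z}_p$-equivariant approximation of the diagonal on the standard cell structure of $S^\infty$ (equivalently, the coproduct on the minimal periodic resolution of $\mathbb{F}_p$ over $\mathbb{F}_p[\mathbb{Z}_p]$). Your route is genuinely different: you dualize the known ring structure of $H^*(B\mathbb{Z}_p;\mathbb{F}_p)$ via $\langle a\cup b,c\rangle=\langle a\otimes b,\delta_*c\rangle$, and you correctly isolate the one real issue, namely pinning down the dual basis as exactly $\{t^k,\theta t^k\}$ (resp.\ $\{\theta^i\}$ for $p=2$) rather than a rescaling of it. That normalization does matter for the odd-degree classes (a rescaling $\Delta_{2a+1}^\vee=c_a\theta t^a$ would produce coefficients $c_a/c_k$ in the coproduct), and your proposed fix works: $\langle t^k,\Delta_{2k}\rangle=1$ follows from pushing forward along $B\mathbb{Z}_p\to\mathbb{C}P^\infty$, which sends $[\Delta_{2k}]$ to $[\mathbb{C}P^k]$, and then $\langle\theta t^k,\Delta_{2k+1}\rangle=\pm1$ follows from $\beta(\theta t^k)=t^{k+1}$ together with $\beta_*\Delta_{2k+2}=\Delta_{2k+1}$ (read off from $\partial\Delta_{2k+2}=p\,\Delta_{2k+1}$ in the integral cellular complex of the quotient) and the adjunction $\langle\beta x,y\rangle=\pm\langle x,\beta_*y\rangle$. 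One small point you elide is the Koszul sign $(-1)^{j(i-j)}$ in the Kronecker pairing on a tensor product; it is $+1$ on every surviving term (at most one factor is odd-dimensional), so the stated all-coefficients-one formula is unaffected. The trade-off between the two proofs: yours is shorter and uses only classical facts about $H^*(B\mathbb{Z}_p)$, but it only identifies $\delta_*$ on homology, whereas the chain-level diagonal approximation of Seidel--Wilkins is the form actually needed when one degenerates the equivariant parameter $w$ into two parameters $(w_1,w_2)$ in the moduli-space arguments of Section 5.
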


\subsection{Quantum Steenrod operations following Seidel--Wilkins}

We recall the construction of Seidel--Wilkins \cite{Seidel_Wilkins_2022} of the quantum Steenrod operation of a compact monotone symplectic manifold, which in particular applies to the symplectic reduction $V \qu K$. It is a collection of maps
\beqn
Q\Sigma_b: H^*(V \qu K; {\mb F}_p) \to H^*(V \qu K; \Lambda_{{\mb F}_p})[t, \theta]
\eeqn
(for characteristic $p>2$) parametrized by $b \in H^*(V \qu K; {\mb F}_p)$ which extends Wilkins' construction of the quantum Steenrod squares (\cite{wilkins2020construction}). The minor change we will make is that we perturb the almost complex structure and do not use inhomogeneous term for the Cauchy--Riemann equation. The possible loss of transversality for constant sphere is restored by perturbing the Morse functions, the same way as in \cite{wilkins2020construction}.

Fix a Morse--Smale pair $(f, g)$ on $V \qu K$. Consider a sphere $S^2 = {\mb C} \cup \{\infty\}$ with $p+1$ incoming marked points $z_0 = 0$ and $z_1, \ldots, z_p$ being the $p$-th roots of unity and one output placed at $\infty \in S^2$, also denoted by $z_\infty$. Then ${\mb Z}_p$ acts on the marked sphere by permuting the marked points in an obvious way, fixing $z_0$ and $z_\infty$. We denote by $\tau^j(i)$ for $\tau^j \in {\mb Z}_p$ and $i \in \{0, 1, \ldots, p, \infty\}$ the induced action on the index set. Fix a $p+2$-tuple of families of perturbations of the Morse function $f$, denoted by $f_{i, w}: V \qu K \to {\mb R}$ with $i = 0, \ldots, p, \infty$ and $w \in S^\infty$ satisfying 
\beqn
f_{\tau^j (i), \tau^j(w)} = f_{i, w},\ \forall \tau^j \in {\mb Z}_p,\ i \in \{0, 1, \ldots, p, \infty\}.
\eeqn
Assume these perturbations are supported away from critical points of $f$. Also choose a ${\mb Z}_p$-equivariant family of almost complex structures $J_w(z)$ on $V \qu K$ parametrized by $z \in S^2$ and $w \in S^\infty$ satisfying
\beqn
J_{\tau^j (w)}(\tau^j (z)) = J_w (z),\ \forall \tau^j \in {\mb Z}_p.
\eeqn
We require that near marked points, $J_w (z)$ is a fixed almost complex structure. 

Consider moduli spaces of ``treed holomorphic spheres.'' For each spherical class $d \in \pi_2(X)$, a $p+2$-tuple of critical points $(x_0|x_1, \ldots, x_p|x_\infty)$, and a subset $\Delta \subset S^\infty$, consider the moduli space 
\beqn
 {\mc M}_{p+2, \Delta}^{\rm downstairs} (d)
\eeqn
consisting of tuples 
\beqn
(w, u, \vec{y}):= (w, u, y_0|y_1, \ldots, y_p|y_\infty)
\eeqn
where $w \in \Delta$, $u: S^2 \to V \qu K$, and $y_i: I_i \to V \qu K$ ($I_i = (-\infty, 0]$ if $i \neq \infty$ and $I_\infty = [0, +\infty)$) satisfying \footnote{We may also allow $f_{i, w}$ to be domain-dependent, i.e., varying with $t \in I_i$ in a compact subset. }
\begin{align*}
&\ \ov\partial_{J_w} u = 0,\ &\ y_i'(t) + \nabla f_{i, w}(y_i (t)) = 0,
\end{align*} 
the matching conditions
\beqn
y_i(0) = u(z_i),\ i = 0, \ldots, p, \infty,
\eeqn
and 
\beqn
u_*[S^2] = d \in \pi_2( V \qu K).
\eeqn
There is a free ${\mb Z}_p$-action on ${\mc M}_{p+2, S^\infty}^{\rm downstairs} (d)$ given by 
\beqn
\tau^j (w, u, y_i) = (\tau^j (w), u \circ \tau^j, y_{\tau^j(i)} ).
\eeqn

Let $x_0, \ldots, x_p, x_\infty$ be critical points of $f$. Denote 
\beqn
{\mc M}_{p+2, \Delta}^{\rm downstairs} (d; x_0|x_1, \ldots, x_p|x_\infty) \subset {\mc M}_{p+2, S^\infty}^{\rm downstairs} (d)
\eeqn
be the subset of elements $(w, u, y_i)$ with $w \in \Delta$ such that $y_i$ converges to $x_i$ at infinity. Then for $\tau^j \in {\mb Z}_p$, let 
\beqn
(x_1^{(j)}, \ldots, x_p^{(j)})
\eeqn
be the $p$-tuple obtained by cyclically permuting $(x_1, \ldots, x_p)$ for $j$ times to the right (so $x_1^{(1)} = x_p$). Then by the symmetry of the equation, one has 
\beq\label{moduli_symmetry}
\tau^j \Big( {\mc M}_{p+2, \Delta}^{\rm downstairs} (d; x_0|x_1, \ldots, x_p| x_\infty) \Big) = {\mc M}_{p+2, \tau^j (\Delta)}^{\rm downstairs} (d; x_0|x_1^{(j)}, \ldots, x_p^{(j)}|x_\infty).
\eeq


\begin{lemma}\cite[Lemma 4.1]{Seidel_Wilkins_2022}\label{lemma32}
Choose a generic family of almost complex structures $J_w$.
\begin{enumerate}

\item For each $i$, ${\mc M}_{p+2, \Delta_i}^{\rm downstairs} (d; x_0|x_1, \ldots, x_p|x_\infty)$ is a smooth manifold with boundary and 
\beqn
{\rm dim} {\mc M}_{p+2, \Delta_i}^{\rm downstairs} (d; x_0|x_1, \ldots, x_p| x_\infty) = i + 2c_1(d) + |x_\infty| - |x_0| - \cdots - |x_p|.
\eeqn
The boundary points are tuples $(w, u, y_i)$ with $w \in \partial \Delta_i$.

\item When the dimension is zero, the moduli space ${\mc M}_{p+2, \Delta_i}^{\rm downstairs} (d; x_0|x_1,\ldots, x_p|x_\infty)$ is a finite set of tuples $(w, u, y_0|y_1, \ldots, y_p|y_\infty)$ with $w \in {\rm Int} \Delta_i$.

\item When the dimension is one, ${\mc M}_{p+2, \Delta_i}^{\rm downstairs} (d; x_0|x_1, \ldots, x_p|x_\infty)$ can be compactified to a 1-dimensional manifold with boundary, where boundary points corresponding to configurations which has either exactly one broken edge with $w \in {\rm Int}\Delta_i$, or configurations $(w, u, y_i)$ with $w \in \partial \Delta_i$. The latter, by the identification \eqref{moduli_symmetry}, can be identified with the disjoint union 
\beqn
\bigsqcup_{j} {\mc M}_{p+2, \Delta_{i-1}}^{\rm downstairs} (d; x_0|x_1^{(j)}, \ldots, x_p^{(j)} |x_\infty)\ {\rm over}\ \left\{ \begin{array}{ll} j = 0, \ldots, p-1,\ &\ i \in 2 {\mb Z},\\
 j = 0, 1,\ &\ i \notin 2{\mb Z}.\end{array}\right.
\eeqn
\end{enumerate}
\end{lemma}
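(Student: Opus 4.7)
The plan is to follow the argument of \cite[Lemma 4.1]{Seidel_Wilkins_2022} closely, with the only modification that I avoid inhomogeneous perturbations of the Cauchy--Riemann equation, compensating with domain-dependent Morse functions as in \cite{wilkins2020construction}. First I would realize the moduli space as the zero locus of a Fredholm section of a Banach bundle over the Banach manifold of tuples $(w, u, y_0|\ldots|y_\infty)$, where $w \in \Delta_i$, the map $u$ has Sobolev regularity $W^{1, q}$ (for some $q > 2$) and represents the class $d$, and each $y_j$ is a half-infinite gradient trajectory with the appropriate exponential decay at the asymptotic end. The cutting equations are $\ov\partial_{J_w} u = 0$, the $w$-dependent negative gradient flow equations for $y_j$, the matchings $y_j(0) = u(z_j)$, and the asymptotic conditions at $\pm\infty$.

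Second, for transversality, I would exploit that ${\mb Z}_p$ acts freely on $S^\infty$: any ${\mb Z}_p$-equivariant family $J_w$ can be prescribed freely on a fundamental domain and then extended equivariantly. Standard Sard--Smale arguments on the universal moduli give regularity for nonconstant spherical components, and the evaluation maps at the $z_j$ are submersions so that the Morse constraints cut out smooth submanifolds of the claimed dimension. The delicate case is ${\mb Z}_p$-fixed configurations, such as constant spheres, on which an equivariant perturbation of $J$ has no effect; for these I would restore transversality by ${\mb Z}_p$-equivariantly perturbing the Morse functions $f_{i, w}$ in the manner of Wilkins, supported away from critical points but varying with $w$. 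I expect this step to be the main obstacle, since any such perturbation must simultaneously respect the cyclic symmetry and be generic enough on the fixed locus, and the usual Sard--Smale trick has to be adapted to work within the equivariant ambient.

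Third, for the dimension formula: since the $p+2$ marked points are fixed on $S^2$ there is no $PSL(2, {\mb C})$-quotient, so the parametrized moduli of $J_w$-holomorphic spheres in class $d$ has real dimension $\dim (V \qu K) + 2 c_1(d)$; adding $w \in \Delta_i$ contributes $i$; each incoming matching $y_j(0) \in W^u(x_j)$ imposes codimension $|x_j|$, and the outgoing matching $y_\infty(0) \in W^s(x_\infty)$ imposes codimension $\dim (V \qu K) - |x_\infty|$. Summing these contributions yields the formula in the statement.

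Finally, for compactness and boundary identification, Gromov compactness combined with the monotonicity of $V \qu K$ shows that sphere bubbling is codimension at least two and so contributes no boundary in the one-dimensional case. The remaining codimension-one degenerations are therefore (a) the breaking of exactly one Morse trajectory with $w$ in the interior of $\Delta_i$, producing the usual Floer-type broken-trajectory boundary, and (b) the parameter $w$ approaching $\partial \Delta_i$. Using the decomposition $\partial \Delta_i = \bigcup_j \tau^j(\Delta_{i-1})$ together with the ${\mb Z}_p$-symmetry \eqref{moduli_symmetry}, boundary points with $w \in \tau^j(\Delta_{i-1})$ correspond bijectively to elements of ${\mc M}^{\rm downstairs}_{p+2, \Delta_{i-1}}(d; x_0|x_1^{(j)}, \ldots, x_p^{(j)}|x_\infty)$, and the range of $j$ (all of ${\mb Z}_p$ for even $i$, only $\{0, 1\}$ for odd $i$) reflects which $\tau^j$-translates actually appear in $\partial \Delta_i$. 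Standard gluing for Morse trajectories in case (a) and transverse intersection with $\partial \Delta_i$ in case (b) then give the manifold-with-boundary structure and complete the identification of the boundary strata.
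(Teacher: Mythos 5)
Your proposal is correct and takes essentially the same approach as the paper, which does not reprove this statement but cites \cite[Lemma 4.1]{Seidel_Wilkins_2022} and notes only the modification you describe (a generic family $J_w$ together with $w$-dependent perturbations of the Morse functions, in place of an inhomogeneous term, to handle constant spheres). The one step you flag as a potential obstacle---equivariant transversality---is in fact unproblematic for exactly the reason you yourself give earlier: since ${\mb Z}_p$ acts freely on $S^\infty$, all perturbation data can be prescribed freely on a fundamental domain and extended equivariantly, so the parametrized problem has no fixed locus and the standard Sard--Smale argument applies without adaptation.
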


Using the orientation on the moduli space of holomorphic spheres and the orientations on the (un)stable manifolds of the Morse--Smale pair, one can define the counts of the above moduli spaces, which is nonzero only when the dimension of the moduli space is zero. Define\footnote{We can allow the Morse--Smale pairs at $z_0$ or $z_\infty$ to be different from the pair at $z_1, \ldots, z_p$.}
\beqn
\Sigma_{i, d}: CM^*(f)^{\otimes p+1} \to CM^{*- i - 2 c_1(d)} (f)
\eeqn
by spanning 
\beqn
\Sigma_{i,d}(x_0\otimes x_1\otimes \cdots \otimes x_p):= \sum_{x_\infty \in {\rm crit} f} \# {\mc M}_{p+2, \Delta_i}^{\rm downstairs} (d; x_0|x_1, \ldots, x_p|x_\infty) x_\infty. 
\eeqn
Restrict to $x_1 = \cdots = x_p = b$, one defines $\Sigma_{d, b}: CM^*(f) \to CM^*(f)[t, \theta]$ by 
\beqn
x\mapsto (-1)^{|b||x|} \sum_{k\geq 0} \left( \Sigma_{2k, d}(x \otimes b\otimes \cdots \otimes b) + (-1)^{|b| + |x|} \Sigma_{2k+1, d} (x\otimes b \otimes \cdots \otimes b) \theta \right) t^k
\eeqn
Then he quantum Steenrod operation is the sum over the degree
\beqn
Q\Sigma_b:= \sum_d q^d \Sigma_{d, b}: CM^*(f)\otimes \Lambda \to CM^*(f; \Lambda)[t, \theta].
\eeqn
By \cite[Lemma 4.4]{Seidel_Wilkins_2022}, when $b$ is a cocycle, $Q\Sigma_b$ is a chain map and the induced map on cohomology only depends on the cohomology class of $b$.

\subsection{Classical Steenrod operation on classifying space}

The construction of Seidel--Wilkins sketched above extended the original Morse-theoretic construction of the classical Steenrod operations by Betz--Cohen \cite{Betz_Cohen_1994}. Indeed, if we replace the symplectic manifold $V \qu K$ by any compact manifold $M$, while requiring, in the definition of the moduli spaces the holomorphic spheres to be constant maps from $S^2$ to $M$ (hence no dependence on $d$), then one can produce a map 
\beqn
\Sigma_b: CM^*(f) \to CM^*(f)[t, \theta]
\eeqn
which descends to a well-defined map on cohomology that coincides with the classical Steenrod operation.   

We can slightly generalize the Morse-theoretic construction to the classifying space. Choose a concrete model of $BK$ which admits a manifold approximation
\beqn
B_n K \subset B_{n+1} K \subset BK.
\eeqn
Here $B_n K$ are smooth oriented manifolds with strictly increasing dimensions. Then we have 
\beqn
\varprojlim_{n \to \infty} H^*(B_n K) = H^*(BK).
\eeqn
For any $b\in H^*(BK)$, let $b_n$ be its restriction to $B_n K$. By the naturality of the classical Steenrod operations, one has the following commutative diagram
\beqn
\xymatrix{ H^*(BK) \ar[r] \ar[d]_{\Sigma_b} & H^*(B_{n+1} K) \ar[r] \ar[d]_{\Sigma_{b_{n+1}}} & H^*(B_n K)\ar[d]_{\Sigma_{b_n}} \\
 H^*(BK)[t, \theta] \ar[r] & H^*(B_{n+1} K)[t, \theta] \ar[r] & H^*(B_n K)[t, \theta] }.
\eeqn
Then for fixed $a, b \in H^*(BK)$, $\Sigma_b(a)$ is determined by $\Sigma_{b_n}(a_n)$ for any sufficiently large $n$. As $B_n K$ are finite-dimensional compact manifolds, the Steenrod operations on $BK$ can be realized by the Morse-theoretic construction on $B_n K$ for large $n$.

\section{The equivariant quantum Kirwan map}\label{section4}

The equivariant quantum Kirwan map is possible because the domain admits a natural rotational symmetry. We sketch the construction first. Consider the moduli space of stable affine vortices with one interior marking, regarded as the origin of ${\mb C}$. Then ${\mb Z}_p$ acts on ${\mb C}$ fixing the origin $z_0 = 0$. Let $\wh J_w (z)$ be a family of $K$-invariant $\omega_V$-compatible almost complex structures on ${\mb C}$ depending on $w \in S^\infty$ and $z \in {\mb C}$ such that 
\beqn
\wh J_{\tau^j(w)}(\tau^j(z)) = \wh J_w (z)\ \forall \tau^j \in {\mb Z}_p.
\eeqn
For any subset $\Delta \subset S^\infty$, consider the moduli space 
\beqn
{\mc M}_{1, \Delta}^{\rm vortex}(d)
\eeqn
of tuples $(w, A, u)$ where $w \in \Delta$ and $(A, u)$ is a degree $d$ affine vortex with respect to the complex structure $\wh J_w$. It also admits the cusp compactification 
\beqn
\wc {\mc M}_{1, \Delta}^{\rm vortex}(d)
\eeqn
and a Poincar\'e bundle 
\beqn
\wc {\mc P}_{0, \Delta} \to \wc{\mc M}_{1, \Delta}^{\rm vortex}(d).
\eeqn
Then by the equivariance of the almost complex structure $\wh J_w (z)$, there is a free ${\mb Z}_p$-action on $\wc{\mc M}_{1, S^\infty}^{\rm vortex}(d)$ with natural lifts on the Poincar\'e bundle $\wc {\mc P}_{0, S^\infty}$, such that 
\beqn
\tau^j( \wc{\mc M}_{1, \Delta}^{\rm vortex}(d)) = \wc{\mc M}_{1, \tau^j(\Delta)}^{\rm vortex}(d), \forall \Delta \subset S^\infty.
\eeqn
For each $i$, choose ${\mb Z}_p$-invariant classifying maps
\beqn
\wh \rho_0: \wc {\mc M}{}_{1, {\mb Z}_p(\Delta_i)}^{\rm vortex}(d) \to B_n K
\eeqn
for a sufficiently large $n$. 

We still use the Morse model. Fix a Morse--Smale pair $(\wh f_0, \wh g_0)$ on $B_n K$ and a Morse--Smale pair $(f_\infty, g_\infty)$ on $V \qu K$. Consider zero or one-dimensional moduli spaces 
\beqn
\wc {\mc M}{}_{1,\Delta_i}^{\rm vortex}(d; \wh x_0, x_\infty) = \left\{ (w, u, \wh y_0, y_\infty)\ \left|\ \begin{array}{c}  (w, u) \in \wc {\mc M}{}_{1, \Delta_i}^{\rm vortex}(d),\ \wh y_0: I_0 \to B_n K,\ y_\infty: I_\infty \to V \qu K,\\
\wh y_0' + \nabla \wh f_{w, 0} (\wh y_0) = 0,\ \wh y_0(\infty) = \wh x_0,\ \wh y_0(0) = \wh \rho_0 (w, u),\\
y_\infty' + \nabla f_\infty(y_0) = 0,\ y_\infty(\infty) = x_\infty,\  \ev_\infty(u) = y_\infty(0).
\end{array} \right. \right\}.
\eeqn
As we did in Section \ref{section2}, this notation allows $\wh y_0$ or $y_\infty$ to have breakings. 

\begin{lemma}\label{lemma41}
Suppose the virtual dimension $i + 2c_1(d) - |\wh x_0| + |x_\infty|$ is at most one, then one can choose a classifying map $\wh \rho_0$ such that
\begin{enumerate}

\item If the virtual dimension is negative, the above moduli space is empty.

\item If the virtual dimension is zero, then for any configuration $(w, u, \wh y_0, y_\infty)$ in the moduli space, $w \in {\rm Int} \Delta_i$ and $\wh y_0, y_\infty$ have no breakings.

\item If the virtual dimension is one, then the moduli space is a compact one-dimensional manifold with boundary, where boundary configurations consists of either $w \in {\rm Int}\Delta_i$ with $\wh y_0$ or $y_\infty$ has exactly one breakings, or $w \in \partial \Delta_i$ with $\wh y_0$ and $y_\infty$ having no breaking.
\end{enumerate}
\end{lemma}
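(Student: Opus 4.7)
The plan is to carry out an inductive transversality argument over the stratification of the moduli space, adapted from the one used in Lemma \ref{lemma29}, with the extra parameter $w \in \Delta_i$ and with the ${\mb Z}_p$-equivariance of $\wh \rho_0$ playing the role of a symmetry constraint. First I would stratify $\wc{\mc M}{}_{1, \Delta_i}^{\rm vortex}(d)$ by pairs $(\alpha, \beta)$, where $\alpha$ is a combinatorial type of a stable affine vortex of degree $d_\alpha$ with $c_1(d_\alpha) \leq c_1(d)$ (so that $\partial^\alpha \wc{\mc M}{}_1^{\rm vortex}(d)$ is a smooth manifold by the lemma right after the definition of the cusp compactification) and $\beta$ is an open cell of the CW decomposition of $\Delta_i$. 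The product stratum carries dimension equal to the dimension of the corresponding stratum of $\wc{\mc M}{}_1^{\rm vortex}(d)$ plus $\dim \beta$, and since $\dim \partial^\alpha \wc{\mc M}{}_1^{\rm vortex}(d) \leq \dim^{\rm vir} \wc{\mc M}{}_1^{\rm vortex}(d) - 2$ for any proper $\alpha$, the only top stratum is the product of $\mathrm{Int}\Delta_i$ with the smooth locus.

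Next I would choose the classifying map $\wh \rho_0$ together with the family $\wh J_w$ by induction on the depth of these strata, exactly as in Lemma \ref{lemma29}: at the deepest stratum $\wh \rho_0$ is defined arbitrarily, and at each subsequent stage one extends continuously from the already-treated strata (using the retraction of a neighborhood of $B_n K$ to $B_n K$, and increasing $n$ if necessary) and then perturbs within a stratum to make the map
\beqn
(\wh \rho_0, \ev_\infty): \partial^{(\alpha, \beta)} \wc{\mc M}{}_{1, \Delta_i}^{\rm vortex}(d) \longrightarrow B_n K \times V \qu K
\eeqn
transverse to $W^u(\wh x_0) \times W^s(x_\infty)$ for every pair of critical points under consideration. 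To preserve the ${\mb Z}_p$-equivariance on ${\mb Z}_p(\Delta_i) \subset S^\infty$, I would perturb on $\Delta_i$ and propagate to the translates $\tau^j(\Delta_i)$ by the ${\mb Z}_p$-action; since ${\mb Z}_p$ acts freely on $S^\infty$, this transport does not constrain the transversality problem on $\Delta_i$ itself. Perturbations of $\wh f_{0,w}$ and $f_\infty$ supported away from critical points are used exactly as in the classical Morse model to absorb the Morse transversality for broken trajectories.

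With transversality in hand, the three statements follow by a dimension count: the virtual dimension of the full moduli space with external gradient rays is
\beqn
i + 2c_1(d) - |\wh x_0| + |x_\infty|,
\eeqn
and the codimension in $\wc{\mc M}{}_{1, \Delta_i}^{\rm vortex}(d)$ contributed by any non-top $(\alpha, \beta)$ is at least two (from $\alpha$) or one (from $\beta$ when $\beta \subset \partial \Delta_i$), and breakings of $\wh y_0$ or $y_\infty$ each cost codimension one. In virtual dimension $\leq 0$ this forces emptiness or smooth points in $\mathrm{Int}\Delta_i$ without breakings; in virtual dimension $1$ the only codimension-one strata that can carry points are (a) once-broken trajectories with $w \in \mathrm{Int}\Delta_i$ and a smooth affine vortex, and (b) smooth unbroken configurations with $w \in \partial \Delta_i$. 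Compactness of the resulting $1$-manifold uses the Ziltener / cusp compactness of $\wc{\mc M}{}_1^{\rm vortex}(d)$, compactness of $\Delta_i$, and the standard Morse broken-trajectory compactness. That each such codimension-one configuration is an honest boundary point of a $1$-manifold follows from Morse gluing of external edges for (a) and from the fact that the $w$-direction provides a free normal parameter into $\mathrm{Int}\Delta_i$ for (b).

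The main obstacle is the compatibility of the inductive transversality scheme with the ${\mb Z}_p$-equivariance requirement, since the perturbations over $\tau^j(\Delta_i)$ are not free but determined from those over $\Delta_i$. This is resolved because the ${\mb Z}_p$-action on $S^\infty$ (hence on $\wc{\mc M}{}_{1, S^\infty}^{\rm vortex}(d)$, via $\tau^j(w, A, u) = (\tau^j w, (\tau^j)^* A, u \circ \tau^j)$) is free, so the transversality conditions on $\Delta_i$ and on its translates are independent and a generic perturbation on the fundamental domain suffices. The only subtle point is at the shared boundary cells of $\partial \Delta_i \cap \tau^j(\partial \Delta_i)$, where the equivariance is automatic from the induction hypothesis: when I extend $\wh \rho_0$ from a codimension-one boundary cell that is shared with $\tau^j(\Delta_i)$, the value is already fixed by the previously constructed equivariant extension on that cell.
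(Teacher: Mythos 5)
Your proposal is correct and follows essentially the same route as the paper, which simply invokes the inductive stratum-by-stratum perturbation of the classifying map from Lemma \ref{lemma29}; your additional care with the free ${\mb Z}_p$-action on $S^\infty$ and the shared boundary cells of the $\Delta_i$ is a faithful elaboration of what that reference leaves implicit.
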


\begin{proof}
Similar to the case of Lemma \ref{lemma29}.
\end{proof}

Then consider the count of zero-dimensional moduli spaces, denoted by 
\beqn
n_{i,d}^{eq} (\wh x_0, x_\infty) \in {\mb F}_p.
\eeqn
Define
\beqn
\kappa_d^{eq} (\wh x_0) = \sum_{x_\infty \in {\rm crit} f_\infty} \sum_{k=0}^\infty \left( n_{2k, d}^{eq}(\wh x_0, x_\infty) + \theta n_{2k+1, d}^{eq}(\wh x_0, x_\infty) \right) t^k.
\eeqn

\begin{cor}
The map $\kappa_d^{eq}: CM^*(\wh f_0; {\mb F}_p) \to CM^*(f_\infty; {\mb F}_p)[t, \theta]$ is a chain map which is independent of the choices up to homotopy. Moreover, the induced map on equivariant cohomology does not depend on the choice of large $n$. 
\end{cor}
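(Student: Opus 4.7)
The strategy is a standard cobordism analysis of the one-dimensional moduli spaces produced by Lemma \ref{lemma41}. Fix $d$, $i$, and asymptotic data $(\wh x_0, x_\infty)$ with $i + 2c_1(d) - |\wh x_0| + |x_\infty| = 1$. The compact $1$-manifold $\wc{\mc M}{}_{1,\Delta_i}^{\rm vortex}(d;\wh x_0, x_\infty)$ has boundary of two types: (i) configurations with $w \in {\rm Int}(\Delta_i)$ admitting one Morse breaking (on either the $B_n K$ side or the $V\qu K$ side), and (ii) configurations with $w \in \partial \Delta_i$ having no Morse breaking. Summing the signed count of (i) across both sides and all intermediate critical points, and packaging the $i$-dependence via the $t^k$ and $\theta t^k$ coefficients, recovers $(\partial\circ\kappa_d^{eq} - \kappa_d^{eq}\circ\partial)(\wh x_0)$ for the Morse differentials $\partial$. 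The chain map identity thus reduces to showing that the signed count of (ii) vanishes in ${\mb F}_p$ for each $i$.

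Because the single marking $z_0 = 0$ is fixed by the ${\mb Z}_p$-rotation, the ${\mb Z}_p$-action identifies the moduli over each translate $\tau^j(\Delta_{i-1})$ with the moduli over $\Delta_{i-1}$ with \emph{identical} asymptotic data $(\wh x_0, x_\infty)$, and this action, being induced by holomorphic domain rotations and gauge transformations, preserves the complex orientation of the vortex moduli. Hence for $i = 2k$ even, $\partial \Delta_{2k}$ is a union of $p$ translates of $\Delta_{2k-1}$ contributing equal signed counts, yielding $p\cdot n_{2k-1,d}^{eq}(\wh x_0, x_\infty) \equiv 0 \pmod p$; for $i = 2k+1$ odd, $\partial \Delta_{2k+1} = \Delta_{2k} \cup \tau(\Delta_{2k})$ has two components whose induced boundary orientations from $\Delta_{2k+1}$ are opposite, so the two counts cancel.

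Independence from the auxiliary choices $(\wh J_w, \wh\rho_0, (\wh f_0,\wh g_0), (f_\infty, g_\infty))$ up to chain homotopy follows in the usual way: interpolate two sets of choices by a ${\mb Z}_p$-equivariant one-parameter family and rerun the same boundary analysis on the parametric moduli over $\Delta_i \times [0, 1]$, whose $r$-endpoints produce the difference of the two $\kappa_d^{eq}$'s, while the $w$-boundary contributions vanish by the same mod-$p$/orientation argument. For independence of $n$, I would take a Morse--Smale pair on $B_{n+1}K$ restricting to the chosen one on $B_n K$ and use compactness of the cusp compactification to homotope any classifying map into $B_{n+1}K$ to one landing in $B_n K$; the induced maps then agree and pass to the inverse limit $\varprojlim_n H^*(B_n K; {\mb F}_p) = H^*(BK; {\mb F}_p)$.

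The main technical obstacle I anticipate is the orientation bookkeeping in the odd-$i$ cancellation for $p > 2$: one must confirm that $\Delta_{2k}$ and $\tau(\Delta_{2k})$, viewed as boundary components of $\Delta_{2k+1}$, carry \emph{opposite} induced boundary orientations, so that the equal unsigned counts (identified via the orientation-preserving ${\mb Z}_p$-action) genuinely cancel with signs. This mirrors the role of the sign factor $(-1)^{|b|+|x|}\theta$ in the Seidel--Wilkins definition of $Q\Sigma_b$, which is there to absorb the cyclic-permutation signs of the $p$ equal insertions; in our setting, with only the fixed marking at $z_0$ and no permutation of asymptotic data, the vanishing reduces to the antipodal boundary structure of $\Delta_{2k+1}$ together with the holomorphicity of the ${\mb Z}_p$-action.
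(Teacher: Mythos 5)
Your proposal is correct and follows the same route as the paper, whose entire proof is the one-line observation that the boundary configurations with $w \in \partial \Delta_i$ contribute zero in characteristic $p$; you have simply supplied the standard details (the $p$-fold repetition for even $i$, the orientation-reversed pair for odd $i$ coming from $\partial\Delta_{2k+1} = \Delta_{2k}\cup\tau(\Delta_{2k})$, and the usual parametric-moduli and inverse-limit arguments for independence of choices and of $n$). Nothing in your write-up conflicts with the paper's argument.
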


\begin{proof}
The extra boundary configurations described in (3) of Lemma \ref{lemma41} corresponding to $w \in \partial \Delta_i$ contribute zero to the count (in characteristic $p$). 
\end{proof}

Hence $\kappa_d^{eq}$ induces a map on cohomology. Define the ${\mb Z}_p$-equivariant quantum Kirwan map by 
\beqn
\kappa^{eq}: H^*(BK; {\mb F}_p) \to H^*(V \qu K; {\mb F}_p)[t, \theta]
\eeqn
by 
\beqn
\kappa^{eq} = \sum_{d \in \pi_2^K(V)} q^d \kappa_d^{eq}.
\eeqn

\subsection{Special values}

Now we prove that the classical part of the equivariant quantum Kirwan map (by specializing to $q = 0$) coincides with the classical Kirwan map with no equivariant parameters.

\begin{prop}\label{prop43}
For any $a \in H_K^*(X; {\mb F}_p)$, one has 
\beqn
\kappa^{\it eq}(a)|_{q = 0} = \kappa(a)|_{q = 0} \in H^*(V \qu K; {\mb F}_p).
\eeqn
\end{prop}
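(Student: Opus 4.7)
The plan is to exploit the fact that at $d = 0$ the vortex equation is trivially satisfied and the underlying moduli reduces to the symplectic quotient itself, and to arrange the perturbation data to be $w$-independent on this stratum so that the equivariant moduli becomes an honest product with the parameter simplex; a dimension count will then force all higher $t, \theta$ contributions to vanish.

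First I would unpack the $d = 0$ stratum using the example following the Poincar\'e bundle construction: $\wc{\mc M}{}_1^{\rm vortex}(0)$ is compact and canonically homeomorphic to $V \qu K$ via $\ev_\infty$, with Poincar\'e bundle $\mu^{-1}(0) \to V \qu K$ classified by some $\rho_0: V \qu K \to B_n K$. Since constant affine vortices are invariant under rotations of the domain, in the equivariant setup the $d = 0$ stratum of the moduli is simply $S^\infty \times V \qu K$, with ${\mb Z}_p$-action trivial on the second factor. I would then choose $\wh f_{w, 0} \equiv \wh f_0$ (independent of $w$, hence trivially ${\mb Z}_p$-equivariant) together with a ${\mb Z}_p$-equivariant classifying map of the form $\wh\rho_0(w, p) = \rho_0(p)$ on this stratum. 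This is a valid choice: data on different combinatorial strata can be picked independently, the almost complex structure is irrelevant for constant vortices, and $\kappa^{eq}$ depends on the auxiliary data only up to chain homotopy.

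With these choices, the moduli appearing in Lemma \ref{lemma41} splits as a genuine product
\beqn
\wc{\mc M}{}_{1, \Delta_i}^{\rm vortex}(0; \wh x_0, x_\infty) \cong \Delta_i \times \wc{\mc M}{}_1^{\rm vortex}(0; \wh x_0, x_\infty),
\eeqn
whose second factor is precisely the non-equivariant moduli computing $n_0(\wh x_0, x_\infty)$. A zero-dimensional contribution to $n_{i, 0}^{eq}(\wh x_0, x_\infty)$ forces $i = |\wh x_0| - |x_\infty|$. At $i = 0$ the count returns exactly $n_0(\wh x_0, x_\infty)$, while at $i > 0$ the non-equivariant factor has negative virtual dimension and is empty for a generic $\rho_0$, so every coefficient of positive $t^k$ or $\theta t^k$ in $\kappa_0^{eq}(\wh x_0)$ vanishes. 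Thus $\kappa_0^{eq} = \kappa_0$ as maps into $H^*(V \qu K; {\mb F}_p) \subset H^*(V \qu K; {\mb F}_p)[t, \theta]$, and specializing $q = 0$ isolates these $d = 0$ terms on both sides, yielding the claim.

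The main point to verify carefully is that the $w$-independent choice on the $d = 0$ stratum is compatible with the transversality requirements of Lemma \ref{lemma41} for higher-degree strata, but this is not a real obstruction: $d = 0$ is a compact connected component in the stratification by combinatorial type, disjoint from the $d > 0$ strata, so the classifying maps and perturbations on higher-energy strata can be chosen freely to achieve the required transversality while keeping the $d = 0$ restriction fixed.
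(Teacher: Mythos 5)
Your proposal is correct and follows essentially the same route as the paper: both identify the $d=0$ moduli as a product $\Delta_i \times V\qu K$ by taking $w$-independent perturbation data and a classifying map pulled back from $V\qu K$, and then observe that for $i\geq 1$ the non-equivariant factor has negative virtual dimension and is empty, killing all positive $t,\theta$ contributions. Your extra remark about compatibility of this choice with transversality on higher-degree strata is a reasonable addition that the paper leaves implicit.
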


\begin{proof}
The $q = 0$ part of $\kappa^{\it eq}$ only has contributions from trivial affine vortices. For any fixed almost complex structure $\wh J$ (independent of $w \in S^\infty$), the moduli space ${\mc M}_1^{\rm vortex}(d)$ with $d = d_0 = 0$ is compact and homeomorphic to $V \qu K$. Therefore, 
\beqn
\wc{\mc M}_{1, \Delta_i}^{\rm vortex}(d_0) \cong \Delta_i \times V \qu K.
\eeqn
The Poincar\'e bundle is the pullback of the bundle $\mu^{-1}(0) \to V \qu K$. One can choose a classifying map independent of the factor $\Delta_i$ such that the moduli spaces
\beqn
\wc {\mc M}{}_1^{\rm vortex}(d_0; \wh x_0, x_\infty)
\eeqn
satisfy conditions of Lemma \ref{lemma29} whenever $2c_1(d_0) - |\wh x_0| + |x_\infty| \leq 1$. Therefore, using the pullback of this classifying map to $\wc{\mc M}{}_{1, \Delta_i}^{\rm vortex}(d_0)$, when $i + 2c_1(d_0) - |\wh x_0| + |x_\infty| = 0$ and $i \geq 1$, one has
\beqn
\wc {\mc M}{}_{1, \Delta_i}^{\rm vortex}(d_0; \wh x_0, x_\infty) = \emptyset.
\eeqn
This implies that 
\beqn
n_{i, d_0}^{eq}(\wh x_0, x_\infty) = 0\ \forall i \geq 1. \qedhere
\eeqn
\end{proof}

\begin{prop}
Under the monotonicity condition, $\kappa^{eq}(1) = 1$.
\end{prop}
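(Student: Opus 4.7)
The plan is a short dimension count followed by an appeal to Proposition \ref{prop43}. Represent the unit $1 \in H^0(BK; {\mb F}_p)$, for sufficiently large $n$, by the maximum $\wh x_0$ of $\wh f_0$ on $B_n K$, so that $|\wh x_0| = 0$.

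By the virtual dimension formula in Lemma \ref{lemma41}, the count $n^{eq}_{i,d}(\wh x_0, x_\infty)$ can be nonzero only if
\[
i + 2c_1(d) - |\wh x_0| + |x_\infty| = 0,
\]
which, with $|\wh x_0| = 0$, reduces to $|x_\infty| = -i - 2c_1(d)$. By Hypothesis \ref{hyp4}, any affine vortex $(A, u)$ of class $d$ satisfies
\[
0 \leq E(A, u) = \langle [\omega_V + \mu], d \rangle = \lambda \, c_1(d),
\]
with $\lambda > 0$, and the energy is strictly positive unless the vortex is constant. Hence whenever the moduli space ${\mc M}_1^{\rm vortex}(d)$ is nonempty, one has $c_1(d) \geq 0$, with equality only for $d = 0$. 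Combined with $i \geq 0$ and $|x_\infty| \geq 0$, the only solution is $d = 0$, $i = 0$, and $|x_\infty| = 0$. In particular $\kappa^{eq}_d(1) = 0$ for every $d \neq 0$, so no positive power of $q$ appears in $\kappa^{eq}(1)$, and since $i = 0$ is also forced, no factor of $t$ or $\theta$ appears either.

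The remaining contribution $n^{eq}_{0,0}(\wh x_0, x_\infty)$ is precisely the $t = \theta = 0$ specialization of $\kappa^{eq}|_{q = 0}$ covered by Proposition \ref{prop43}, which identifies it with the classical Kirwan map $\kappa|_{q=0}$ applied to $1$. The latter is the composition of the restriction $H_K^*(V) \to H_K^*(\mu^{-1}(0))$ with the isomorphism $H_K^*(\mu^{-1}(0)) \cong H^*(V \qu K)$ induced by the free $K$-action on $\mu^{-1}(0)$, both of which preserve the unit; so this term contributes $1 \in H^0(V \qu K; {\mb F}_p)$. Combining with the vanishing statement above gives $\kappa^{eq}(1) = 1$ in $QH^0(V \qu K; \Lambda_{{\mb F}_p})[t, \theta]$.

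The only substantive input --- and the one place monotonicity is essential --- is the strict positivity $c_1(d) > 0$ for every nonzero $d$ supporting an affine vortex. This is what rules out all quantum and equivariant corrections at the unit class; I do not foresee any further obstacle, since everything else reduces to the dimension equation and to a result already established.
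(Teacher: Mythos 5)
Your proof is correct and follows essentially the same route as the paper: the dimension formula forces $|x_\infty| = -i - 2c_1(d)$, monotonicity (via energy positivity) forces $c_1(d)\geq 0$ so only $i = c_1(d) = 0$ can contribute, and the remaining classical term is the unit. The only difference is that you spell out the final identification via Proposition \ref{prop43}, which the paper leaves implicit.
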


\begin{proof}
Assume $\wh x_0$ is a global maximum of $\wh f_\infty$ which represents the unit element of $H^*(BK)$. By the dimension formula, $n_{i, d}^{eq}(\wh x_0, x_\infty) \neq 0$ only when 
\beqn
i + 2c_1(d) - |\wh x_0| + |x_\infty| = 0 \Longrightarrow |x_\infty| = - i - 2c_1(d).
\eeqn
The monotonicity condition implies that $|x_\infty| \geq 0$ only when $i = c_1(d) = 0$. Therefore, the only contribution comes from the classical part, which gives $\kappa^{eq}(1) = 1$.
\end{proof}

\section{Proof of Theorem \ref{thmb}}\label{section5}

The proof of Theorem \ref{thmb} is very similar to that of Theorem \ref{thma}. We only need to use a different moduli space and keep track of the additional parameter $w \in S^\infty$. The argument is also analogous to that of \cite[Proposition 4.8]{Seidel_Wilkins_2022} and we will be sketchy. 

We first describe the moduli spaces. Consider the complex plain ${\mb C}$ together with marked points 
\beqn
z_0 = 0\ {\rm and}\  z_i = a \xi_i, i = 1, \ldots, p
\eeqn
where $\xi_1, \ldots, \xi_p \in S^1$ are the $p$-th roots of unity. Such marked scaled curves form a 1-dimensional moduli space parametrized by $a \in (0, \infty)$, which can be compactified by adding two boundary points. When $a \to 0$, a sphere with $p$ marked points bubble off; when $a \to \infty$, $p$ complex planes with one marking are connected by one sphere. In both cases, the marked points or nodes on the spheres can be regarded as placed at the positions of all $p$-th roots of unity (see Figure \ref{fig:ppoints}).

\begin{figure}[h]
    \centering
    \includegraphics[scale = 0.9]{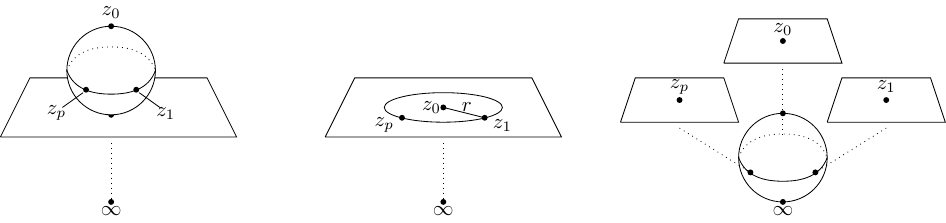}
    \caption{The one-dimensional moduli space of marked scaled curves with $p$ marked points.}
    \label{fig:ppoints}
\end{figure}

Let $\ov{\mc M}{}_{p+1, {\mb R}}^{\rm scaled}$ denote the moduli space of these marked scaled curves and $\ov{\mc C}{}_{p+1, {\mb R}}^{\rm scaled}$ the universal curve. There is a ${\mb Z}_p$-action on the family $\ov{\mc C}{}_{p+1, {\mb R}}^{\rm scaled} \to \ov{\mc M}{}_{p+1, {\mb R}}^{\rm scaled}$ induced by the cyclic symmetry. The moduli space $\ov{\mc M}{}_{p+1, {\mb R}}^{\rm scaled}$ can be identified with an interval $[0, 1]$ with the parameter denoted by $r$. 

Consider a family of almost complex structures $\wh J_{(r), w}(z)$ parametrized by $r$, $w \in S^\infty$, and $z \in \ov{\mc C}{}_{p+1, {\mb R}}^{\rm scaled}$, with the equivariance condition:
\beqn
\wh J_{(r), \tau^j(w)}(\tau^j z) = \wh J_{(r), w}(z)\ \forall \tau^j \in {\mb Z}_p.
\eeqn
Then one can consider the moduli space 
\beqn
\wc{\mc M}{}_{p+1, [0, 1], \Delta}^{\rm vortex}(d)
\eeqn
of tuples $(r, w, u)$ where $r \in [0, 1]$, $w \in \Delta \subset S^\infty$, and $u$ is a stable affine vortex in the cusp compactification. There are Poincar\'e bundles
\beqn
\wc {\mc P}_{i, [0, 1],\Delta} \to \wc {\mc M}{}_{p+1, [0, 1], \Delta}^{\rm vortex}(d), \ i = 0, 1, \ldots, p.
\eeqn

We choose $(r, w)$-dependent Morse--Smale pairs $(\wh f_{(r), j, w}, \wh g_{(r), j, w})$ on $B_n K$, $j = 1, \ldots, p$ such that 
\beqn
(\wh f_{(r), i, w}, \wh g_{(r), i, w}) = (\wh f_{(r), \tau^j(i), \tau^j(w)}, \wh g_{(r), \tau^j (i), \tau^j(w)}),\ i = 1, \ldots, p,\ \tau^j \in {\mb Z}_p,
\eeqn
all of which are perturbations (away from critical points) of a fixed Morse--Smale pair $(\wh f, \wh g)$ on  $B_n K$. Choose an $(r, w)$-independent Morse--Smale pairs $(\wh f_0, \wh g_0)$ on $B_n K$ and an $(r, w)$-independent Morse--Smale pairs $(f_\infty, g_\infty)$ on $V\qu K$.


One can perturb the almost complex structure $\wh J_{(r), w}$ so that on each stratum of the above cusp compactification, the evaluation map at infinity is transverse to all stable manifolds of $f_\infty$. Then we choose suitable classifying maps
\beqn
\wh \rho_i: \wc {\mc M}{}_{p+1, [0, 1], {\mb Z}_p(\Delta_i)}^{\rm vortex}(d) \to B_n K,\ i = 0, \ldots, p
\eeqn
satisfying a natural ${\mb Z}_p$-equivariance condition. In addition, notice that on the $r= 0$ slice of the moduli space the Poincar\'e bundles are all naturally isomorphic. Hence we can require that the $p+1$ classifying maps are identical on the $r = 0$ slice. Then define 
\beqn
\wc{\mc M}{}_{p+1, [0,1], \Delta_i}^{\rm vortex}(d; \wh x_0| \wh x_1, \ldots, \wh x_p| x_\infty)
\eeqn
by adding gradient rays at incoming and outgoing edges. The classifying maps can be chosen so that when the expected dimension is at most $1$, moduli spaces
\beqn
\wc {\mc M}{}_{p+1, [0, 1], \Delta_i}^{\rm vortex}(d; \wh x_0|\wh x_1, \ldots, \wh x_{p}|x_\infty)
\eeqn
consist of points with no further degenerations other than shapes described in Figure \ref{fig:ppoints} or once-broken trajectories. 

By using the gluing result of \cite{Xu_glue}, one can show that the one-dimensional moduli spaces are actually 1-dimensional compact manifolds with boundary. Next we need to identify the counts of boundary points. Similar to the proof of Theorem \ref{thma}, we extend the parameter $r$ to $[-1, 2]$ and inserting gradient segments at interior nodes. The details are given below.

For $r \in [-1, 0]$, we insert an edge $[-l(r), l(r)]$ between the sphere and the plane with $l(0) = 0$ and $l(r) \to +\infty$ when $r \to -1$, while adding a gradient segment in $B_n K$ of an $(r, w)$-independent Morse--Smale pair $(\wh f_\infty, \wh g_\infty)$ defined on $[-l(r), l(r)]$ with the obvious matching condition. We choose the pair such that the classical Steenrod operation on $B_n K$ is defined for the pairs $(\wh f_{(0), i, w}, \wh g_{(0), i, w})$ for $i = 0, \ldots p$ and $(\wh f_\infty, \wh g_\infty)$. For this range of $r$, the almost complex structure, the other Morse--Smale pairs, and the classifying maps remain constant. 

For $r\in [1, 2]$, we insert $p+1$ copies of $[-l(r), l(r)]$ to each of the node connecting the plane $\Sigma_{(1), i}$, $i = 0, \ldots, p$, and the sphere $\Sigma_{(1), \infty}$ with $l(1) = 0$ and $l(r) \to +\infty$ when $r \to 2$, while adding gradient segments of Morse--Smale pairs $(f_{(r), i, w}, g_{(r), i, w})$ on $V \qu K$ on the $i$-th edge. We may take all of them to be perturbations (away from critical points) of the same Morse--Smale pair $(f, g)$ on $V \qu K$. Here we need to take a further perturbation, requiring that $(f_{(r), i, w}, g_{(r), i, w})$ to be domain-dependent, i.e., depending on points on the interval $[-l(r), l(r)]$. Moreover, when $r = 2$ and each of the edge breaks into two semi-infinite edges $[0, +\infty) \cup (-\infty, 0]$, we require that, for $i = 1, \ldots, p$, the restriction of $(f_{(2), i, w}, g_{(r), i, w})$ to $[0, +\infty)$ is $w$-independent (as it is supposed to be part of the data for the nonequivariant quantum Kirwan map). The equivariance requirement then forces that the restrictions are also $i$-independent. Meanwhile, their restrictions to $(-\infty, 0]$ could be $w$-dependent. We also need to modify the incoming Morse--Smale pairs $(\wh f_{(r), i, w}, g_{(r), i, w})$ for $r\in [1, 2]$ such that, when $r = 2$, for $i = 1, \ldots, p$, $(f_{(2), i, w}, g_{(2), i, w})$ is $w$-independent (and hence $i$-independent).

With all the data specified above, we obtain an extension of the moduli space, denoted by 
\beqn
\wc{\mc M}{}_{p+1, [-1, 2], \Delta}^{\rm vortex}(d; \wh x_0|\wh x_1, \ldots, \wh x_p|x_\infty)
\eeqn
which is again a compact 1-dimensional manifolds with boundary. The boundary points within $(-1, 2)$ are all configurations with exactly one external breakings. We need to identify the boundary contributions at $r = -1$ and $r = 2$.

The contribution at $r = -1$ is a chain map 
\beqn
{\mc L}_{(-1)}: CM^*( \wh f_0) \otimes CM^*(\wh f)^{\otimes p} \to CM^*(f_\infty).
\eeqn
The underlying moduli space is the fibre product
\beqn
\bigsqcup_{\wh x_\infty \in {\rm crit} \wh f_\infty} {\mc M}{}^{\rm upstairs}_{p+1, \Delta_i}(\wh x_0|\wh x_1, \ldots, \wh x_p|\wh x_\infty) \times_{S^\infty} \wc {\mc M}{}_{1, \Delta_i}^{\rm vortex}( \wh x_\infty, x_\infty).
\eeqn
Following the same argument as \cite[Proposition 4.8]{Seidel_Wilkins_2022}, we may regard the moduli space as depending on two parameters $(w_1, w_2)$ living in a specific subset of $S^\infty \times S^\infty$. The equivariance condition on the perturbation data implies that the induced counts only depend on the cycle in $B{\mb Z}_p \times B{\mb Z}_p$. Denote the map corresponding to a cycle $[S]$ by ${\mc L}_{(-1), [S]}$. Then the $r = -1$ component of the above moduli space gives the map ${\mc L}_{(-1), \delta([\Delta_i])}$, corresponding to the diagonal cycle of $[\Delta_i]$. One also know that homologous cycles give homotopic maps. By Lemma \ref{lemma31}, we know up to homotopy
\beqn
{\mc L}_{(-1)} \simeq \left\{\begin{array}{ll}  \displaystyle \sum_{i_1 + i_2 = i} {\mc L}_{(-1), [\Delta_{i_1}]\times [\Delta_{i_2}]},\ &\ i\ {\rm odd}\ {\rm or}\ p = 2,\\[0.2cm]
\displaystyle \sum_{i_1 + i_2 = i,\atop i_1, i_2\ {\rm even}} {\mc L}_{(-1), [\Delta_{i_1}]\times [\Delta_{i_2}]},\ &\ i\ {\rm even}\ {\rm and}\ p > 2.
\end{array} \right.
\eeqn
Therefore, the map on homology induced from ${\mc L}_{(-1)}$ is equal to 
\beqn
\kappa^{eq} \circ \Sigma_b: H^*(BK; {\mb F}_p) \to H^*(V \qu K; \Lambda_{{\mb F}_p})[t, \theta].
\eeqn

Similarly, denote the contribution at $r = 2$ by 
\beqn
{\mc L}_{(2)}: CM^*(\wh f_0) \otimes CM^*(\wh f)^{\otimes p} \to CM^*( f_\infty).
\eeqn
The underlying moduli space is the fibre product
\beqn
\bigsqcup_{d_0 + \cdots + d_p + d_\infty = d \atop x_0, \ldots, x_p \in {\rm crit} f} \wc {\mc M}{}_{1, \Delta_i}^{\rm vortex}(d_0; \wh x_0, x_0) \times_{S^\infty} {\mc M}{}_{p+2, \Delta_i}^{\rm downstairs}(d_\infty; x_0|x_1, \ldots, x_p|x_\infty) \times \prod_{i=1}^p \wc{\mc M}{}_1^{\rm vortex}(d_i; \wh x_i, x_i).
\eeqn
Using Lemma \ref{lemma31} again, we can identify, up to homotopy, the contribution of boundary points on the $r = 2$ side is a chain map whose restriction to $\wh x_1 = \cdots \wh x_p = b$ is
\beqn
\wh x_0 \mapsto Q\Sigma_{\kappa(b)} (\kappa^{eq}(\wh x_0)).
\eeqn

This finishes the proof of Theorem \ref{thmb}.

\bibliography{reference}

\bibliographystyle{amsalpha}

\end{document}